\title[Computable reducibility and an effective jump operator]{Computable reducibility of equivalence relations and an effective jump operator}
\author{John D. Clemens}
\address{1910 University Drive, Boise, ID 83725}
\email{johnclemens@boisestate.edu}
\author{Samuel Coskey}
\address{1910 University Drive, Boise, ID 83725}
\email{scoskey@boisestate.edu}
\author{Gianni Krakoff}
\address{1910 University Drive, Boise, ID 83725}
\email{giannikrakoff@boisestate.edu}
\subjclass[2020]{Primary 03D25, 03D30, Secondary 03D65, 03F15}
\keywords{computable reducibility, ceers, hyperarithmetic equivalence relations}
\renewcommand{\setminus}{-}
\newcommand{\N}{{\mathbb N}}
\newcommand{\id}{\mathsf{Id}}
\DeclareMathOperator{\ran}{\mathop{\mathrm{ran}}}
\DeclareMathOperator{\supp}{\mathop{\mathrm{supp}}}
\newcommand{\jump}{{\dot{+}}}
\newtheorem{theorem}{Theorem}[section]
\newtheorem{lemma}[theorem]{Lemma}
\newtheorem{proposition}[theorem]{Proposition}
\newtheorem{corollary}[theorem]{Corollary}
\newtheorem{claim}{Claim}
\theoremstyle{definition}
\newtheorem{definition}[theorem]{Definition}
\theoremstyle{remark}
\newtheorem{question}{Question}
\newenvironment{claimproof}[1][\proofname]{\par
  \normalfont
\trivlist 
  \item[\hskip\labelsep\hskip\parindent
    #1:]\ignorespaces
}{ \ifmmode 
  \else \leavevmode\unskip\penalty9999 \hbox{}\nobreak\hfill
  \fi
          \quad\hbox{\text{$\Box$(claim)}} \endtrivlist }
\begin{document}

\begin{abstract}
  We introduce the computable FS-jump, an analog of the classical Friedman--Stanley jump in the context of equivalence relations on the natural numbers. We prove that the computable FS-jump is proper with respect to computable reducibility. We then study the effect of the computable FS-jump on computably enumerable equivalence relations (ceers).
\end{abstract}

\maketitle

\section{Introduction}

The backdrop for our study is the notion of computable reducibility of equivalence relations. If $E,F$ are equivalence relations on $\N$ we say $E$ is \emph{computably reducible} to $F$, written $E\leq F$, if there exists a computable function $f\colon\N\to\N$ such that for all $n$,$n'$
\[n\mathrel{E}n'\iff f(n)\mathrel{F}f(n')\text{.}
\]
This notion was first studied in both \cite{ershov,bernardi-sorbi}; it has recently garnered further study for instance in \cite{gao-gerdes,fokina-friedman-classes,fokina-friedman-etal,coskey-hamkins-miller} and numerous other works including those cited below.

Computable reducibility of equivalence relations may be thought of as a computable analog to Borel reducibility of equivalence relations on standard Borel spaces. Here if $E,F$ are equivalence relations on standard Borel spaces $X,Y$ we say $E$ is \emph{Borel reducible} to $F$, written $E\leq_B F$, if there exists a Borel function $f\colon X\to Y$ such that $x\mathrel{E}x'\iff f(x)\mathrel{F}f(x')$. We refer the reader to \cite{gao} for the basic theory of Borel reducibility.

One of the major goals in the study of computable reducibility is to compare the relative complexity of classification problems on a countable domain. In this context, if $E\leq F$ we say that the classification up to $E$-equivalence is no harder than the classification up to $F$-equivalence. For instance, classically the rank~$1$ torsion-free abelian groups (the subgroups of $\mathbb{Q}$) may be classified up to isomorphism by infinite binary sequences up to almost equality. Since this classification may be carried out in a way which is computable in the indices, there is a computable reduction from the isomorphism equivalence relation on c.e.\ subgroups of $\mathbb{Q}$ to the almost equality equivalence relation on c.e.\ binary sequences.

A second major goal in this area is to study properties of the hierarchy of equivalence relations with respect to computable reducibility. The computable reducibility quasi-order is quite complex: for instance it is shown in \cite[Theorem~4.5]{bard} that it is at least as complex as the Turing degree order, and in \cite{andrews2021structure} that its theory is equivalent to second order arithmetic. In a portion of this article we will pay special attention to the sub-hierarchy consisting of just the ceers. An equivalence relation $E$ on $\N$ is called a \emph{ceer} if it is computably enumerable, as a set of pairs. Ceers were called positive equivalence relations in \cite{ershov}, subsequently named ceers in \cite{gao-gerdes}, and further studied in works such as \cite{andrews-etal-universal,andrews-sorbi-joins,andrews-sorbi-jumps}.

As with other complexity hierarchies, it is natural to study operations such as jumps. One of the most important jumps in Borel complexity theory is the Friedman--Stanley jump, which is defined as follows. If $E$ is a Borel equivalence relation on the standard Borel space $X$, then the \emph{Friedman--Stanley jump} of $E$, denoted $E^{+}$, is the equivalence relation defined on $X^\N$ by
\[x\mathrel{E^{+}}x'\iff\{[x(n)]_E:n\in\N\}=\{[y(n)]_E:n\in\N\}.
\]
Friedman and Stanley showed in \cite{friedman-stanley} that the jump is \emph{proper}, that is, if $E$ is a Borel equivalence relation, then $E<_B E^{+}$. Moreover they studied the hierarchy of iterates of the jump and showed that any Borel equivalence relation induced by an action of $S_{\infty}$ is Borel reducible to some iterated jump of the identity.

In this article we study a computable analog of the Friedman--Stanley jump, called the computable FS-jump and denoted $E^{\jump}$, in which the arbitrary sequences $x(n)$ are replaced by computable enumerations $\phi_e(n)$. In Section~2 we will give the formal definition of the computable FS-jump, and establish some of its basic properties.

In Section~3 we show that the computable FS-jump is proper, that is, if $E$ is a hyperarithmetic equivalence relation, then $E<E^{\jump}$. We do this by showing that any hyperarithmetic set is many-one reducible to some iterated jump of the identity, and establishing rough bounds on the descriptive complexity of these iterated jumps.

In Section~4 we study the effect of the computable FS-jump on ceers. We show that if $E$ is a ceer with infinitely many classes, then $E^{\jump}$ is bounded below by the identity relation $\id$ on $\N$, and above by the equality relation $=^{ce}$ on c.e.\ sets. This leads to a natural investigation of the structure that the jump induces on the ceers, analogous to the study of the structure that the Turing jump induces on the c.e.\ degrees. For instance, we may say that a ceer $E$ is \emph{high for the computable FS-jump} if $E^{\jump}$ is computably bireducible with $=^{ce}$. At the close of the section, we begin to investigate the question of which ceers are high for the computable FS-jump and which are not.

In the final section we present several open questions arising from these results.

\textbf{Acknowledgement.} This work includes a portion of the third author's master's thesis \cite{gianni-thesis}. The thesis was written at Boise State University under the supervision of the first and second authors. The authors would also like to thank the referee for suggesting numerous improvements.

\section{Basic properties of reducibility and the jump}

In this section we fix some notation, introduce the computable FS-jump, and exposit some of its basic properties.

In this and future sections, we will typically use the letter $e$ for an element of $\N$ which we think of as an index for a Turing program. We will use $\phi_e$ for the partial computable function of index $e$, and $W_e$ for the domain of $\phi_e$.

\begin{definition}
  \label{def:jump}
  Let $E$ be an equivalence relation on $\N$. The \emph{computable FS-jump} of $E$ is the equivalence relation on indices of c.e.\ subsets of $\N$ defined by
  \[e\mathrel{E^{\jump}}e'\iff
    \{[\phi_e(n)]_E:n\in\N\}=\{[\phi_{e'}(n)]_E:n\in\N\}.
  \]
  When $E$ is defined on a countable set other than $\N$ (or computable subset thereof) we define $E^{\jump}$ similarly, considering $\varphi_e$ to have its range in the domain of $E$; formally we may compose $\varphi_e$ with a computable bijection from $\N$ to the domain of $E$.
  Furthermore we define the iterated jumps $E^{\jump n}$ inductively by $E^{\jump 1}=E^{\jump}$ and $E^{\jump (n+1)}=(E^{ \jump n})^{\jump}$.
\end{definition}

We remark that we could also have defined $E^{\jump}$ by working with domains $W_e$ rather than ranges $\ran(\phi_e)$. While each choice has conveniences, we use Definition~\ref{def:jump} due to its analogy with the Friedman--Stanley jump.

We mention here that several other jumps of equivalence relations have been studied in the case of ceers. The halting jump and saturation jump were introduced in \cite{gao-gerdes}. The halting jump of $E$, denoted $E'$, is defined by setting $x \mathrel{E'} y$ iff $x = y \vee \phi_x(x) \downarrow \mathrel{E} \varphi_y(y) \downarrow$. The halting jump and its transfinite iterates are investigated extensively in \cite{andrews-sorbi-jumps}. The saturation jump of $E$, denoted $E^{+}$, is defined on finite subsets of $\N$ where $x$ and $y$ are saturation jump equivalent if their $E$-saturations are equal as sets. The saturation jump may be viewed as a finite-sequence version of the computable FS-jump. As observed in \cite{gao-gerdes} it is not always the case that $E < E'$ and $E < E^{+}$. It is worth noting that the computable FS-jump dominates the saturation jump under computable reducibility, and dominates the halting jump for ceers $E$.

Unless explicitly stated otherwise, any further use of the word ``jump'' will refer to the computable FS-jump.

We are now ready to establish some of the basic properties of the computable FS-jump. In the following, we let $\id$ denote the identity equivalence relation on $\N$. It is worth noting that, although several of these results are direct analogues of results in Section~7 of \cite{gao-gerdes}, our results apply to an arbitrary equivalence relation $E$ and not only ceers (unless stated otherwise).

\begin{proposition}
  \label{prop:monotone}
  For any equivalence relations $E$ and $F$ on $\N$ we have:
  \begin{enumerate}
    \item $E\leq E^{\jump}$.
    \item If $E$ has only finitely many classes, then $E < E^{\jump}$.
    \item If $E\leq F$ then $E^{\jump}\leq F^{\jump}$.
  \end{enumerate}
\end{proposition}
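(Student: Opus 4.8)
The plan is to handle the three parts in order, using the s-m-n theorem throughout to produce the required computable maps on indices. For (a), I would send $n$ to an index $c_n$ of the constant partial computable function with $\phi_{c_n}(m)=n$ for all $m$; by the s-m-n theorem the map $n\mapsto c_n$ is computable. Since $\{[\phi_{c_n}(m)]_E:m\in\N\}=\{[n]_E\}$, a singleton, we have $c_n\mathrel{E^+}c_{n'}\iff\{[n]_E\}=\{[n']_E\}\iff n\mathrel{E}n'$, so $n\mapsto c_n$ witnesses $E\leq E^+$.

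For (b), in view of (a) it suffices to show $E^+\not\leq E$, and for this it is enough to observe that $E^+$ has strictly more equivalence classes than $E$, since any reduction (computable or not) from $E^+$ to $E$ descends to an injection from the $E^+$-classes into the $E$-classes. Suppose $E$ has exactly $k\geq 1$ classes, with representatives $r_1,\dots,r_k$. For each $S\subseteq\{1,\dots,k\}$ I would fix an index $e_S$ for which $\phi_{e_S}$ enumerates $\{r_i:i\in S\}$ (with $W_{e_S}=\emptyset$ when $S=\emptyset$); distinct $S$ give distinct realized sets of $E$-classes, so the $e_S$ are pairwise $E^+$-inequivalent and $E^+$ has at least $2^k$ classes. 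As $2^k>k$, this completes (b).

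For (c), suppose $g$ is a computable reduction witnessing $E\leq F$. By the s-m-n theorem there is a computable function $h$ with $\phi_{h(e)}=g\circ\phi_e$ for every $e$; I claim $h$ witnesses $E^+\leq F^+$. The key point is that $g$, being a reduction, induces a well-defined and injective map $\hat g$ from $E$-classes to $F$-classes via $\hat g([m]_E)=[g(m)]_F$. Consequently, for each $e$,
\[ \{[\phi_{h(e)}(n)]_F:n\in\N\}=\{[g(\phi_e(n))]_F:n\in\N\}=\hat g\bigl(\{[\phi_e(n)]_E:n\in\N\}\bigr), \]
and since $\hat g$ is injective we have $\hat g(A)=\hat g(B)\iff A=B$ for any sets $A,B$ of $E$-classes. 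Applying this to the sets of $E$-classes realized by $e$ and $e'$ gives $e\mathrel{E^+}e'\iff h(e)\mathrel{F^+}h(e')$, as required.

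I expect the only real content lies in part (c): the observation that a reduction passes to an injection on quotients which commutes with taking the set of realized classes. Parts (a) and (b) are routine applications of s-m-n together with a class-counting argument. A minor point to check is the handling of partial functions with empty or finite domain, but since $g$ is total we have $W_{h(e)}=W_e$ and the empty set of classes is treated uniformly, so no difficulty arises.
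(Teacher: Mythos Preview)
Your proof is correct and follows essentially the same approach as the paper: for (a) you map $n$ to an index for the constant function $n$, for (b) you count classes ($2^k>k$), and for (c) you postcompose with the given reduction via s-m-n. The paper's argument is terser but identical in substance; your additional remarks about the induced injection $\hat g$ on quotients and the handling of partial domains simply make explicit what the paper leaves as ``straightforward to verify.''
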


\begin{proof}
  (a) Let $f$ be a computable function such that for all $e$ we have that $\phi_{f(e)}$ is the constant function with value $e$. (To see that there is such a computable function $f$, one can either ``write a Turing program'' for the machine indexed by $f(e)$ or employ the s-m-n theorem. In the future we will not comment on the computability of functions of this nature.) Then $e\mathrel{E}e'$ if and only if $[e]_E=[e']_E$, if and only if $f(e)\mathrel{E^{\jump}}f(e')$.
  
  (b) Note that if $E$ has $n$ classes, then $E^{\jump}$ has $2^n$ classes.

  (c) This is similar to \cite[Theorem~8.4]{gao-gerdes}. Let $f$ be a computable reduction from $E$ to $F$. Let $g$ be a computable function such that $\phi_{g(e)}(n)=f(\phi_e(n))$. Then it is straightforward to verify that $g$ is a computable reduction from $E^{\jump}$ to $F^{\jump}$.
\end{proof}

Slightly less trivially we also note the following.

\begin{proposition}
  \label{prop:double_plus}
  For any $E$ with infinitely many classes we have $\id\leq E^{\jump\jump}$.
\end{proposition}

\begin{proof}
  We define a reduction function $f$ that works simultaneously for all equivalence relations $E$ with infinitely many classes. Given $n$, let $f(n)$ be a code for a machine such that the sequence of sets $S_i=\phi_{f(n)}(i)$ consists of all $n$-element subsets of $\N$. Clearly since $E^{\jump\jump}$ is reflexive we have that $n=n'$ implies $f(n)\mathrel{E^{\jump\jump}}f(n')$. Conversely suppose $n\neq n'$, and assume without loss of generality that $n<n'$. Then for all $i\in\N$ we have that $[\phi_{f(n)}(i)]_{E^{\jump}}$ is a code for at most $n$-many $E$-classes. On the other hand since $E$ has infinitely many classes, there exists $i\in\N$ such that $[\phi_{f(n)}(i)]_{E^{\jump}}$ is a code for exactly $n'$-many $E$-classes. It follows that $\{[\phi_{f(n)}(i)]_{E^{\jump}}:i\in\N\}\neq\{[\phi_{f(n')}(i)]_{E^{\jump}}:i\in\N\}$, or in other words, $f(n)\mathrel{\cancel{E^{\jump\jump}}}f(n')$.
\end{proof}

In the following, we let $E\oplus F$ denote the equivalence relation defined on $\N\times\{0,1\}$ by $(m,i)(E\oplus F)(n,j)$ iff $(i=j=0)\wedge(m\mathrel{E}n)$ or $(i=j=1)\wedge(m\mathrel{F}n)$. Finally, we let $E\times F$ denote the equivalence relation defined on $\N\times\N$ by $(m,n)(E\times F)(m',n')$ iff $m\mathrel{E}m'\wedge n\mathrel{F}n'$.

\begin{proposition}
  \label{prop:product}
  $(E\oplus F)^{\jump}$ is computably bireducible with $E^{\jump}\times F^{\jump}$.
\end{proposition}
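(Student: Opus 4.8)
The plan is to exhibit computable reductions in both directions, exploiting the fact that an $(E\oplus F)$-class is either an $E$-class (sitting over $0$) or an $F$-class (sitting over $1$). Consequently a set of $(E\oplus F)$-classes splits canonically into a set of $E$-classes together with a set of $F$-classes, which is precisely the data encoded by a point of $E^+\times F^+$.

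For the reduction $(E\oplus F)^+\leq E^+\times F^+$, fix a computable bijection between $\N$ and $\N\times\{0,1\}$, so that each value $\phi_e(n)$ is read as a code for a pair $(m,i)$. Given an index $e$, let $a(e)$ and $b(e)$ be indices (via the s-m-n theorem) for the partial computable functions which, on input $n$, run $\phi_e(n)$ and, if it converges to a code for $(m,i)$, output $m$ when $i=0$ (respectively $i=1$) and diverge otherwise. Then $\{[\phi_{a(e)}(n)]_E:n\in\N\}$ is exactly the set of $E$-classes $[m]_E$ such that $(m,0)$ is enumerated by $\phi_e$, and similarly $\{[\phi_{b(e)}(n)]_F:n\in\N\}$ records the part over $1$. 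Since two sets of $(E\oplus F)$-classes are equal if and only if their parts over $0$ and over $1$ agree separately, the map $e\mapsto\langle a(e),b(e)\rangle$ is a computable reduction.

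For the reduction $E^+\times F^+\leq(E\oplus F)^+$, given a pair coded by $\langle a,b\rangle$, let $c(a,b)$ be an index for the partial computable function which on input $2n$ runs $\phi_a(n)$ and on convergence to $m$ outputs the code of $(m,0)$, and on input $2n+1$ runs $\phi_b(n)$ and on convergence to $m$ outputs the code of $(m,1)$. Then $\{[\phi_{c(a,b)}(n)]_{E\oplus F}:n\in\N\}$ is the disjoint union of a copy of $\{[\phi_a(n)]_E:n\in\N\}$ over $0$ and a copy of $\{[\phi_b(n)]_F:n\in\N\}$ over $1$, so equality of these combined class-sets is equivalent to coordinatewise equality, and $\langle a,b\rangle\mapsto c(a,b)$ is a computable reduction.

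The verifications are routine once the coding conventions are fixed. The only points requiring a little care are the handling of divergent computations, so that a ``missing'' coordinate simply contributes no class, and the book-keeping needed to confirm that equality of the combined class-sets corresponds exactly to coordinatewise equality; I do not anticipate a substantive obstacle.
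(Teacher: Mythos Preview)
Your proposal is correct and follows essentially the same approach as the paper: split an index $e$ into indices $a(e),b(e)$ by filtering on the second coordinate for the forward reduction, and interleave two enumerations on even/odd inputs for the reverse reduction. The paper's proof is the same construction with slightly different notation and less commentary on divergence.
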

  
\begin{proof}
  For the forward reduction, given an index $e$ for a function into $\N\times\{0,1\}$, let $\phi_{e_0}(n)=m$ if $\phi_{e}(n)=(m,0)$ and let $\phi_{e_1}(n)=m$ if $\phi_{e}(n)=(m,1)$; $\phi_{e_i}$ is undefined otherwise. Then the map $e\mapsto(e_0,e_1)$ is a reduction from $(E\oplus F)^{\jump}$ to $E^{\jump}\times F^{\jump}$. For the reverse reduction, given a pair of indices $(e_0,e_1)$ we define $\phi_e(2n)=(\phi_{e_0}(n),0)$ and $\phi_e(2n+1)=(\phi_{e_1}(n),1)$. Once again it is easy to verify $(e_0,e_1)\mapsto e$ is a reduction from $E^{\jump}\times F^{\jump}$ to $(E\oplus F)^{\jump}$.
\end{proof}

In the next result we will briefly consider the connection between the computable FS-jump and the restriction of the classical FS-jump to c.e.\ sets. In the literature, the $n$th iterated classical FS-jump of $\id$ is usually denoted $F_n$. For our purposes it will be convenient to regard each $F_n$ as an equivalence relation on $\mathcal P(\N)$. Thus we officially define $F_1$ as the equality relation on $\mathcal P(\N)$. Letting $\langle\cdot,\cdot\rangle$ be the usual pairing function $\N^2\to\N$, and let $A^{[n]}$ denote the $n$th ``column'' of $A$, that is, $A^{[n]}=\{p\in\N:\langle n,p\rangle\in A\}$. We then officially define $A\mathrel{F_2}B$ iff $\{A^{[n]}:n\in\N\}=\{B^{[n]}:n\in\N\}$. Similarly for all $n$ we can officially define $F_n$ on $\mathcal P(\N)$ by means of a fixed uniformly computable family of bijections between $\N^n$ and $\N$. So defined, $F_n$ is naturally Borel bireducible with the literal $n$th iterated classical FS-jump of $\id$.

Next, recall from \cite{coskey-hamkins-miller} that for any equivalence relation $E$ on $\mathcal P(\N)$ we can define its \emph{restriction to c.e.\ sets} $E^{ce}$ on $\N$ by
\[e\mathrel{E}^{ce}e'\iff W_e\mathrel{E}W_{e'}.
\]
In particular, $(F_1)^{ce}$ is $=^{ce}$, which figures prominently in the theory of computable reducibility. We are now ready to state the following.

\begin{proposition}
  \label{prop:idplus}
  For any $n$, we have that $\id^{\jump n}$ is computably bireducible with $(F_n)^{ce}$.
\end{proposition}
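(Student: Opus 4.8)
The plan is to induct on $n$. For the base case $n=1$, I would note that since $\id$-classes are singletons, $e\mathrel{\id^+}e'$ holds exactly when $\ran(\phi_e)=\ran(\phi_{e'})$. Since the range of a partial computable function is uniformly c.e., and conversely every c.e.\ set is uniformly the range of a partial computable function, there are computable functions $h,g$ with $W_{h(e)}=\ran(\phi_e)$ and $\ran(\phi_{g(d)})=W_d$; these witness that $\id^+$ is bireducible with $=^{ce}=(F_1)^{ce}$.

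For the inductive step, assume $\id^{+n}$ is bireducible with $(F_n)^{ce}$. Applying Proposition~\ref{prop:monotone}(c) to each of the two witnessing reductions yields that $(\id^{+n})^+$ is bireducible with $((F_n)^{ce})^+$, and since $\id^{+(n+1)}=(\id^{+n})^+$ by definition, it suffices to prove that $((F_n)^{ce})^+$ is bireducible with $(F_{n+1})^{ce}$. Unwinding definitions, $a\mathrel{((F_n)^{ce})^+}a'$ asserts $\{[W_c]_{F_n}:c\in\ran(\phi_a)\}=\{[W_c]_{F_n}:c\in\ran(\phi_{a'})\}$, so this relation compares c.e.-presented families of c.e.\ sets up to $F_n$; whereas $a\mathrel{(F_{n+1})^{ce}}a'$ asserts $\{[(W_a)_{(m)}]_{F_n}:m\in\N\}=\{[(W_{a'})_{(m)}]_{F_n}:m\in\N\}$, which compares the same kind of data, now presented as the columns of a single c.e.\ set. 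The reduction of $(F_{n+1})^{ce}$ to $((F_n)^{ce})^+$ is the easy direction: I would send $a$ to an index $g(a)$ for the total computable function $m\mapsto d_m$, where $d_m$ is a uniformly computable index for the $m$th column $(W_a)_{(m)}$; then $\ran(\phi_{g(a)})$ realizes precisely the family of columns of $W_a$, so $g$ works.

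The reduction of $((F_n)^{ce})^+$ to $(F_{n+1})^{ce}$ is where the only real subtlety lies, and is the step I expect to require the most care. The naive attempt — enumerate $\ran(\phi_a)=\{c_0,c_1,\dots\}$ in order of appearance and place a copy of $W_{c_k}$ into the $k$th column of $W_{h(a)}$ — nearly works, but when $\ran(\phi_a)$ is finite the unused columns are empty, so $W_{h(a)}$ acquires the spurious class $[\emptyset]_{F_n}$, and one cannot decide whether $\ran(\phi_a)$ is finite. This discrepancy is genuine: $((F_n)^{ce})^+$ has an ``empty'' class, realized when $\phi_a$ has empty range, whereas $(F_{n+1})^{ce}$ has no such class, since the set of columns of a c.e.\ set is always nonempty.

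To repair this, I would fix a uniformly computable operation $\mu_n$ on c.e.\ sets satisfying (i) $\mu_n(W)\mathrel{F_n}\mu_n(W')$ iff $W\mathrel{F_n}W'$, and (ii) $\mu_n(W)$ is never $F_n$-equivalent to $\emptyset$, built by recursion on $n$: take $\mu_1(W)=\{0\}\cup\{x+1:x\in W\}$, and for $n>1$ let $\mu_n(W)$ be the c.e.\ set whose $m$th column is $\mu_{n-1}(W_{(m)})$. Then I would define $W_{h(a)}$ to have $0$th column permanently empty and, for $k\geq 1$, $k$th column equal to $\mu_n(W_{c_{k-1}})$ as soon as the $k$th element $c_{k-1}$ of $\ran(\phi_a)$ appears (and empty otherwise). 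The $F_n$-classes of the columns of $W_{h(a)}$ are then always $\{[\emptyset]_{F_n}\}\cup\{[\mu_n(W_c)]_{F_n}:c\in\ran(\phi_a)\}$, with $[\emptyset]_{F_n}$ lying outside the second set by (ii); so this value determines and is determined by $\{[W_c]_{F_n}:c\in\ran(\phi_a)\}$ using (i), which makes $h$ the required reduction. The remaining work is to verify (i) and (ii) by induction on $n$ — (ii) since every column of $\mu_n(W)$ avoids $[\emptyset]_{F_{n-1}}$ whereas every column of $\emptyset$ is $[\emptyset]_{F_{n-1}}$, and (i) since the induced map on $F_{n-1}$-classes is injective — together with routine checks that the column operations are uniformly computable and compose correctly with the fixed identifications $\N^n\cong\N$.
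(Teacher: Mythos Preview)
Your proof is correct and follows the same inductive scheme as the paper: reduce to showing $((F_n)^{ce})^+$ is bireducible with $(F_{n+1})^{ce}$, with the reverse direction given by reading off columns and the forward direction given by assembling the enumerated sets into columns. The paper's proof is explicitly labeled a sketch and simply writes $(W_{f(e)})_{(n)}=W_{\phi_e(n)}$ without further comment, whereas you have noticed and correctly handled the genuine issue that when $\phi_e$ is partial this introduces a spurious $[\emptyset]_{F_n}$ column-class; your $\mu_n$ device (shift to avoid $\emptyset$, then always include an empty $0$th column) is a clean and valid repair of that gap.
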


\begin{proof}[Proof sketch]
  For $n=1$, we need to show that $\id^{\jump}$ is computably bireducible with $=^{ce}$, which amounts to the effective equivalence of a c.e. set being either the domain or the range of a partial computable function. Namely, let $f$ and $g$ be computable functions so that $W_{f(e)}=\ran(\varphi_e)$ and $\ran(\varphi_{g(e)})=W_e$; then $f$ and $g$ provide the respective reductions.   
  For the induction step, it is sufficient to show that for any $n$ we have that $((F_n)^{ce})^{\jump}$ is computably bireducible with $(F_{n+1})^{ce}$. For notational simplicity, we briefly illustrate this just in the case when $n=1$. For the reduction from $((F_1)^{ce})^{\jump}$ to $(F_2)^{ce}$, we define $f$ to be a computable function such that for all $n$ we have $(W_{f(e)})^{[n]}=W_{\phi_e(n)}$. For the reduction from $(F_2)^{ce}$ to $((F_1)^{ce})^{\jump}$, we define $g$ to be a computable function such that for all $n$ we have $(W_{\phi_{g(e)}})^{[n]}=(W_e)^{[n]}$.
\end{proof}

We shall make frequent use of the particular case that $\id^{\jump}$ is computably bireducible with $=^{ce}$.

To conclude the section, we define transfinite iterates of the computable FS-jump. The transfinite jumps allow one to extend results such as the previous proposition into the transfinite, and they also play a key role in the next section. For the definition, recall that Kleene's $\mathcal O$ consists of notations for ordinals and is defined as follows: $1\in\mathcal O$ is a notation for $0$, if $a\in\mathcal O$ is a notation for $\alpha$ then $2^a$ is a notation for $\alpha+1$, and if for all $n$ we have $\phi_e(n)$ is a notation for $\alpha_n$ with the notations increasing in $\mathcal{O}$ with respect to $n$, then $3\cdot 5^e$ is a notation for $\sup_n\alpha_n$. We refer the reader to \cite{sacks} for background on $\mathcal O$.

\begin{definition}
  We define $E^{\jump a}$ for $a\in\mathcal O$ recursively as follows.
  \begin{align*}
    E^{\jump 1}&=E\\
    E^{\jump 2^b}&=(E^{\jump b})^{\jump}\\
    E^{\jump 3\cdot 5^e}&=\{(\langle m,x\rangle,\langle n,y\rangle):(m=n)\wedge (x\mathrel{E}^{\jump \phi_e(m)}y)\}
  \end{align*}
\end{definition}

We remark that it is straightforward to extend Proposition~\ref{prop:idplus} into the transfinite as follows. Given a notation $a\in\mathcal O$ for $\alpha$, we may use $a$ to define an equivalence relation $F_a$ on $\mathcal P(\N)$ which is Borel bireducible with the $\alpha$-iterated FS-jump $F_\alpha$. We then have that $\id^{\jump a}$ is computably bireducible with $(F_a)^{ce}$. We do not know, however, whether $\id^{\jump a}$ and $\id^{\jump a'}$ are computably bireducible when $a$ and $a'$ are different notations for the same ordinal.

The following propositions will be used in the next section.

\begin{proposition}
  \label{prop:closed}
  If $E^{\jump} \leq E$ then for any $a \in \mathcal{O}$ we have $E^{\jump a} \leq E$.
\end{proposition}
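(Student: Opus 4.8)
The plan is to prove, by effective transfinite recursion on Kleene's $\mathcal O$, the uniform strengthening: there is a partial computable function $a \mapsto \rho(a)$ such that for every $a \in \mathcal O$ we have $\rho(a)\!\downarrow$ and $\phi_{\rho(a)}$ is a reduction from $E^{+a}$ to $E$. The proposition is then immediate. One verifies the strengthening by transfinite induction along the notations, in the three cases of the definition of $E^{+a}$. The base case $a = 1$ is trivial since $E^{+1} = E$. The successor case $a = 2^b$ is easy: by the inductive hypothesis $E^{+b} \leq E$, so $E^{+2^b} = (E^{+b})^+ \leq E^+$ by Proposition~\ref{prop:monotone}(c), while $E^+ \leq E$ by hypothesis, and the composite reduction is computable from $\rho(b)$ together with a fixed reduction witnessing the hypothesis. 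All the work is in the limit case $a = 3 \cdot 5^e$, which I will reduce to a statement about $E$ alone.

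Before running the recursion I would fix a computable reduction $r$ witnessing $E^+ \leq E$ and record three consequences. (i) $E^{++} \leq E$: apply Proposition~\ref{prop:monotone}(c) to get $(E^+)^+ \leq E^+$, then use the hypothesis. (ii) $E \times E \leq E$: send $(a,b)$ to an index $D_{a,b}$ whose c.e.\ function enumerates an index for the constant function $a$ together with an index for a function with range $\{a,b\}$, so that the $E^{++}$-class of $D_{a,b}$ is the Kuratowski pair $\{\{[a]_E\},\{[a]_E,[b]_E\}\}$ of the $E$-classes of $a$ and $b$; by the Kuratowski pairing property $(a,b) \mapsto D_{a,b}$ is a reduction $E \times E \to E^{++}$, and composing with (i) yields $E \times E \leq E$. (iii) $\id \leq E$, which is the crux of the preparation: define a computable sequence $\ell_0, \ell_1, \ell_2, \dots$ by $\ell_m = r(d_m)$, where $d_m$ is an index for the partial function $j \mapsto \ell_j$ on domain $\{j : j < m\}$ (so $d_0$ is an index for the nowhere-defined function). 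The cardinality of the set of $E$-classes enumerated by an index is invariant under $E^+$, so $r$ sends indices whose $E^+$-classes have distinct cardinalities to distinct $E$-classes. By induction on $m$: if $[\ell_0]_E, \dots, [\ell_{m-1}]_E$ are pairwise distinct, then the $E^+$-classes of $d_0, \dots, d_m$ have the pairwise distinct cardinalities $0, 1, \dots, m$, so their $r$-images $\ell_0, \dots, \ell_m$ have pairwise distinct $E$-classes. Hence $m \mapsto \ell_m$ is a reduction $\id \to E$. Finally, $\id \times E \leq E \times E$ via $(m, c) \mapsto (\ell_m, c)$, so together with (ii) we obtain $\id \times E \leq E$.

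For the limit case $a = 3 \cdot 5^e$: on pairs $\langle m, x\rangle$, the relation $E^{+3\cdot 5^e}$ is the disjoint union over $m$ of the relations $E^{+\phi_e(m)}$ acting on the $x$-coordinate, and the inductive hypothesis provides reductions $\phi_{\rho(\phi_e(m))} \colon E^{+\phi_e(m)} \to E$ uniformly computable in $m$. The naive map $\langle m, x\rangle \mapsto \phi_{\rho(\phi_e(m))}(x)$ need not be a reduction, because the images of distinct column reductions may land in the same $E$-class; but $\langle m, x\rangle \mapsto (m, \phi_{\rho(\phi_e(m))}(x))$ is visibly a reduction from $E^{+3\cdot 5^e}$ into $\id \times E$, and composing with $\id \times E \leq E$ from the previous paragraph finishes the case. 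Every step is effective, so $\rho$ remains computable and the recursion is well defined.

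I expect the main obstacle to be this limit case, and within it two steps that are not routine: realizing that the whole matter reduces to $\id \times E \leq E$ --- the point being precisely that the column reductions need not have $E$-disjoint ranges --- and proving $\id \leq E$ by exploiting the fact that the number of $E$-classes enumerated by an index is a reducibility invariant of the jump. The base and successor cases, the lemma $E \times E \leq E$ via Kuratowski pairs, and the routine bookkeeping of effective transfinite recursion on $\mathcal O$ should all present no difficulty.
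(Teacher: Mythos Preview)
Your argument is correct and follows the same overall architecture as the paper: effective transfinite recursion on $\mathcal O$, with the limit case reduced to a single fact about $E$, namely $\id\times E\leq E$ (equivalently $E\times\id\leq E$). The differences lie in how that fact is obtained. The paper first observes that $E$ must have infinitely many classes by Proposition~\ref{prop:monotone}(b), invokes the forward reference Proposition~\ref{prop:double_plus} to get $\id\leq E^{++}\leq E$, and then gives an ad~hoc coding of $E\times\id$ into $E^{++}$. You instead build $\id\leq E$ directly, via the self-referential sequence $\ell_m=r(d_m)$ and the observation that the number of $E$-classes enumerated is an $E^+$-invariant; this is a clean self-contained argument that avoids the forward reference and in fact \emph{proves} that $E$ has infinitely many classes rather than assuming it. You then obtain the stronger $E\times E\leq E$ by Kuratowski pairing into $E^{++}$, which is a bit more conceptual than the paper's bespoke coding. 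Both routes are short; yours is marginally more modular and does not depend on results appearing later in the paper.
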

   
\begin{proof}
  We proceed by recursion on $a \in \mathcal{O}$. It follows from our hypothesis together with Proposition~\ref{prop:monotone}(b) that $E$ has infinitely many classes. By Proposition~\ref{prop:double_plus}, we have $\id \leq E^{\jump\jump}$ and hence $\id \leq E$. From this we can see that $E\times\id\leq E^{\jump\jump}$ as follows. Suppose $h\colon\id \leq E$ and define $h'$ by arranging for $W_{h'(e,n)}=\{0,1\}$, $\phi_{h'(e,n)}(0)=$ a code for $\{e\}$, and $\phi_{h'(e,n)}(1)=$ a code for $\{h(n),h(n+1)\}$. Since $h(n)$ and $h(n+1)$ are distinct for each $n$, we can distinguish $\{h(n),h(n+1)\}$ from $\{e\}$ and recover $e$ and $n$ from $h'(e,n)$, so that $h'\colon E \times \id \leq E^{\jump\jump}$.  Hence we have $E \times \id \leq E$, and we may fix a computable reduction function $g\colon E \times \id \leq E$.
  
  Now let $f\colon E^{\jump} \leq E$ and define uniformly $f_a : E^{\jump a} \leq E$ as follows. Let $f_1$ be the identity map. Given $f_a:E^{\jump a}\leq E $ apply Proposition \ref{prop:monotone}(c) to get $f^+_a:(E^{\jump a})^{\jump}\leq E^{\jump}$, then define $f_{2^a}=f\circ f^+_a$. To define $f_{3 \cdot 5^e}$ it suffices to find a reduction from $E^{\jump 3 \cdot 5^e}$ to $E \times \id$ and compose with $g$; this follows from the fact that we have each $E^{\jump \varphi_e(n)}$ uniformly reducible to $E$ by the effectiveness of the recursion.
\end{proof}

\begin{proposition}
If $E \times \id \leq E$ then for any $a \in \mathcal{O}$ we have $E^{\jump a} \times \id \leq E^{\jump a}$.
\end{proposition}

\begin{proof}
  We proceed by recursion on $a \in \mathcal{O}$, noting that the induction will produce the reduction functions effectively from $a$. Suppose first that $E^{\jump a} \times \id \leq E^{\jump a}$. Then $E^{\jump 2^a} \times \id = (E^{\jump a})^{\jump} \times \id \leq (E^{\jump a})^{\jump} \times \id^{\jump}$, which is bireducible with $(E^{\jump a} \oplus \id)^{\jump}$ by Proposition~\ref{prop:product}. Since the hypothesis implies $\id \leq E$, this is reducible to $(E^{\jump a} \oplus E^{\jump a})^{\jump}$, which is reducible to $(E^{\jump a} \times \id)^{\jump}$, and hence reducible to $(E^{\jump a})^{\jump}=E^{\jump 2^a}$.
  For $E^{\jump 3 \cdot 5^e}$, we assume that $E^{\jump \varphi_e(m)} \times \id \leq E^{\jump \varphi_e(m)}$ uniformly in $m$, from which we see that $E^{\jump 3 \cdot 5^e} \times \id \leq (E \times \id)^{\jump 3 \cdot 5^e} \leq E^{\jump 3 \cdot 5^3}$.
\end{proof}

Since a computable bijection from $\N \times \N$ to $\N$ shows $\id \times \id \leq \id$, we get:

\begin{corollary}
  \label{cor:absorbs_id}
    For any $a \in \mathcal{O}$ we have $\id^{\jump a} \times \id \leq \id^{\jump a}$.
\end{corollary}

\section{Properness of the jump}

In this section we establish the following main result.

\begin{theorem}
  \label{thm:proper}
  If $E$ is a hyperarithmetic equivalence relation on $\N$, then $E<E^{\jump}$.
\end{theorem}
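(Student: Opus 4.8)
The plan is to show two things: that $E \leq E^+$ always (already done in Proposition~\ref{prop:monotone}(a)), and that $E^+ \not\leq E$ whenever $E$ is hyperarithmetic. For the latter I would argue by contradiction: suppose $E^+ \leq E$. If $E$ has only finitely many classes the result is already covered by Proposition~\ref{prop:monotone}(b), so assume $E$ has infinitely many classes. The idea is that the hypothesis $E^+ \leq E$ is very strong — by Proposition~\ref{prop:closed} it forces $E^{+a} \leq E$ for \emph{every} $a \in \mathcal O$, i.e. $E$ absorbs all transfinite iterates of the jump. In particular, since $\id \leq E$ (which follows from $\id \leq E^{++} \leq E$ via the promised Proposition~\ref{prop:double_plus}), we get $\id^{+a} \leq E^{+a} \leq E$ for all notations $a$. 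So it suffices to produce, for each fixed hyperarithmetic $E$, some notation $a$ for which $\id^{+a} \not\leq E$; this contradiction then proves the theorem.

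To carry this out I would control descriptive complexity on both sides. First, using the identification $\id^{+a} \equiv (F_a)^{ce}$ from the transfinite version of Proposition~\ref{prop:idplus}, I would get a lower bound on the complexity of $\id^{+a}$ as $|a|$ grows: concretely, I expect to show that any hyperarithmetic set $X$ is many-one reducible to (an appropriate coding of) some $\id^{+a}$, with $|a|$ depending on the level of $X$ in the hyperarithmetic hierarchy. The intuition is that the classical $F_\alpha$ hierarchy is cofinal in the Borel hierarchy and, restricted to c.e.\ sets and traced through the indices, the $a$-th iterate computes something $\Sigma$ or $\Pi$ at roughly level $\alpha$ in the hyperarithmetic hierarchy — this is the content of the phrase in the introduction, ``any hyperarithmetic set is many-one reducible to some iterated jump of the identity.'' Second, I would establish an upper bound: if $E$ is hyperarithmetic, say $E \in \Sigma^0_\beta$ (or $\Pi^0_\beta$) for some fixed computable ordinal $\beta$, then because a computable reduction cannot raise complexity, every $E^{+a}$ with $E^{+a} \leq E$ must \emph{also} sit at complexity level $\beta$. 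But $\id^{+a}$ for $|a|$ large enough encodes a set strictly above level $\beta$, contradicting $\id^{+a} \leq E$.

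So the key steps, in order, are: (1) reduce to the case that $E$ has infinitely many classes and $E^+ \leq E$; (2) invoke Proposition~\ref{prop:closed} to upgrade this to $E^{+a} \leq E$ for all $a \in \mathcal O$, and combine with $\id \leq E$ to get $\id^{+a} \leq E$ for all $a$; (3) fix a computable ordinal $\beta$ with $E$ hyperarithmetic of level $\leq \beta$, and show via the transfinite Proposition~\ref{prop:idplus} that for a suitable notation $a$ with $|a|$ sufficiently large, $\id^{+a}$ codes a set properly above the $\beta$-th level of the hyperarithmetic hierarchy, so $\id^{+a} \not\leq E$; (4) conclude the contradiction. I would isolate step (3) as a separate lemma bounding the complexity of $\id^{+a}$ from below in terms of $|a|$ and bounding it from above in terms of $|a|$ as well (to make the ``cofinality'' precise), since this is where the real work lies.

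The main obstacle I anticipate is step (3) — pinning down the precise descriptive complexity of the iterated jumps $\id^{+a}$ and showing it genuinely increases without bound through the hyperarithmetic hierarchy. Two subtleties arise: first, as the paper itself notes, $\id^{+a}$ may a priori depend on the choice of notation $a$ and not just on $|a|$, so I must be careful to choose notations for which the complexity estimates are clean, rather than claiming anything uniform over all notations for a given ordinal. Second, the many-one reduction of an arbitrary hyperarithmetic set to some $\id^{+a}$ needs to be set up so that the level of the target $a$ is genuinely controlled by (a notation for) the level of the set in the hyperarithmetic hierarchy; this likely requires an induction along $\mathcal O$ that tracks how the FS-jump on indices interacts with the Turing/hyperarithmetic jump, and getting the bookkeeping of the coding functions right — especially at limit notations $3 \cdot 5^e$ — is where the proof will be most delicate. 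The rest (steps 1, 2, 4) should be routine given the propositions already in hand.
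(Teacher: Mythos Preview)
Your proposal is correct and follows essentially the same architecture as the paper's proof: reduce to infinitely many classes, assume $E^+\leq E$ for contradiction, invoke Proposition~\ref{prop:closed} and $\id\leq E^{++}$ to get $\id^{+a}\leq E$ for all $a\in\mathcal O$, then contradict the fixed complexity bound on $E$ by showing the $\id^{+a}$ are cofinal in the hyperarithmetic hierarchy. The paper isolates your step~(3) as Lemma~\ref{lem:key_proper}, proved by an induction along recursive Borel codes rather than via the transfinite Proposition~\ref{prop:idplus} route you suggest; the delicate bookkeeping you anticipate (tracking containments $\subseteq_E$ through the induction, and handling limit notations) is exactly where the work lies.
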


Since we have $E \leq E^{\jump}$ for each equivalence relation $E$, this amounts to showing that no hyperarithmetic equivalence relation is a fixed point of the computable FS-jump. We will in fact establish the following stronger result.

\begin{theorem}
\label{thm:fixed_points}
Let $E$ be an equivalence relation on $\N$ which is a fixed point for the computable FS-jump. Then $E$ is an upper bound in the $m$-degrees for all hyperathmetic sets.
\end{theorem}

The proof will proceed by showing that iterated jumps of the identity have cofinal descriptive complexity among hyperarithmetic sets. Specifically, we will show that every hyperarithmetic set is many-one reducible to $\id^{\jump a}$ for some $a \in \mathcal{O}$. The proof will involve an induction on the hyperarithmetic hierarchy, and we will utilize a particular type of many-one reduction which we now introduce.

\begin{definition}
Given a relation $E$ on $\N$, we define the relation $\subseteq_E$ by setting $e \subseteq_E e'$ if the following holds:
  \[\forall n [ \phi_e(n) \downarrow \ \Rightarrow \exists m ( \phi_{e'}(m) \downarrow \wedge \phi_e(n) \mathrel{E} \phi_{e'}(m))].
  \]
\end{definition}
We write $e \supseteq_E e'$ when $e' \subseteq_E e$.
Note that when $E$ is an equivalence relation, $\subseteq_E$ is a quasi-order and we have $e\mathrel{E^{\jump}}e'$ iff $e\subseteq_E e'$ and $e'\subseteq_E e$. 

\begin{definition}
Given a set $P$ and an equivalence relation $E$, we say that \emph{$P$ is subset-reducible to $E^{\jump}$} if there is a computable function $h$ and $e_0 \in \N$ so that
for all $n$ we have $h(n) \subseteq_E e_0$, and $P(n) \iff h(n) \mathrel{E^{\jump}} e_0$. We call the pair $(h, e_0)$ a \emph{subset-reduction}.
\end{definition}

If $P$ is subset-reducible to $E^{\jump}$ then it is clearly many-one reducible; we will show that every hyperarithmetic set is subset-reducible to some iterated jump of $\id$.
Since in general $P$ may be many-one reducible to $E$ without $P^c$ being reducible to an iterated jump of $E$, we wish to only use ``positive'' induction steps, i.e., an inductive construction of the hyperarithmetic sets starting from computable sets and involving only effective unions and intersections. Also, since we need to uniformly produce reducing functions throughout the construction of a set, we want to consider the entire construction at once. To this end we introduce the notion of a computable Borel code for a hyperarithmetic set. There are many different presentations of computable Borel codes, all of which give the same collection of sets; the following definition is a slight variation of that given in Chapter~27 of \cite{miller}.

\begin{definition}
  A \emph{computable Borel code} is a pair $(T,f)$ where $T$ is a computable well-founded tree on $\N$ so that $t \smallfrown n \in T$ for all $n$ for non-terminal nodes $t$, and $f$ is a computable function from the terminal nodes of $T$ to $\N$. Given a computable Borel code $(T,f)$, the set $B(T,f)$ is defined by recursion on $t \in T$ as follows. If $t$ is a terminal node, then $B_t(T,f)=\ran \phi_{f(t)}$, and if $t$ is not a terminal node, then  $B_t(T,f)=\{ n: \forall p \exists q (n \in B_{t \smallfrown \langle p,q \rangle}(T,f) )\}$. We let $B(T,f)=B_{\emptyset}(T,f)$.
\end{definition}

The following characterization then follows from the fact that a set is hyperarithmetic if and only if it is $\Delta^1_1,$ together with the Kleene Separation Theorem and the hyperarithmetic codes used in its proof (see, e.g., \cite[Chapter II]{sacks} and \cite[Theorem 27.1]{miller}).

\begin{theorem}
  A set $B$ is hyperarithmetic if and only if there is a computable Borel code $(T,f)$ such that $B=B(T,f)$.
\end{theorem}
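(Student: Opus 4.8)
The plan is to prove both directions by relating recursive Borel codes to the hyperarithmetical hierarchy as stratified by Kleene's $\mathcal O$, using the standard fact (see \cite{sacks}) that a set is hyperarithmetic if and only if it is many-one reducible to $H_a$ for some $a \in \mathcal O$, where the $H_a$ are the iterated Turing jumps along $\mathcal O$ (equivalently, $B$ is $\Delta^1_1$ iff $B \leq_m H_a$ for some $a$). More convenient for us, a set is hyperarithmetic iff it belongs to some $\Sigma^0_\alpha$ level of the effective Borel hierarchy with $\alpha$ a recursive ordinal. I will move freely between these characterizations.

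For the ``if'' direction, suppose $B = B(T,f)$ for a recursive Borel code $(T,f)$. I would argue by effective transfinite recursion on the well-founded tree $T$ that each $B_t(T,f)$ is hyperarithmetic, uniformly in a notation for the rank of $t$ in $T$. At a terminal node $t$, $B_t(T,f) = \ran \phi_{f(t)}$ is $\Sigma^0_1$, hence hyperarithmetic. At a non-terminal node $t$, we have $n \in B_t(T,f) \iff \forall p\, \exists q\, (n \in B_{t\smallfrown\langle p,q\rangle}(T,f))$; since the children have strictly smaller rank, by induction each $B_{t\smallfrown\langle p,q\rangle}$ is $\Delta^1_1$ uniformly, and a $\forall p\,\exists q$ quantification over a uniformly hyperarithmetic family — with a recursive bound on the ranks coming from the recursiveness of $T$ — stays hyperarithmetic. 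The key point is that because $T$ is a \emph{recursive} well-founded tree, one can compute from $T$ a function assigning notations in $\mathcal O$ to the nodes (this is the effective well-foundedness/bounding used in the Kleene--Spector analysis), so the recursion terminates at a recursive ordinal and the resulting set lands in a bounded level $\Sigma^0_\alpha$ with $\alpha$ recursive. Hence $B = B_\emptyset(T,f)$ is hyperarithmetic.

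For the ``only if'' direction, suppose $B$ is hyperarithmetic, so $B \in \Sigma^0_\alpha$ (effectively) for some recursive ordinal $\alpha$ with notation $a \in \mathcal O$. I would build the recursive Borel code $(T,f)$ by recursion on $a \in \mathcal O$, mirroring the three clauses of $\mathcal O$. At the base, a $\Sigma^0_1$ set is $\ran \phi_e$ for a suitable index $e$, which is coded by the one-node tree with $f(\emptyset) = e$. For successor steps, a $\Sigma^0_{\beta+1}$ set has the form $\bigcup_p C_p$ where each $C_p$ is $\Pi^0_\beta$, i.e.\ each $C_p = \N \setminus D_p$ with $D_p \in \Sigma^0_\beta$; for limit steps a $\Sigma^0_\lambda$ set is $\bigcup_p D_p$ with $D_p \in \Sigma^0_{\beta_p}$ along the fundamental sequence. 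In either case one matches the defining formula $\forall p\, \exists q\, (n \in B_{t\smallfrown\langle p,q\rangle})$: the ``$\exists q$'' is used to express the complementation/union bookkeeping and the ``$\forall p$'' captures the outermost quantifier, attaching at each child a subtree coding the appropriate lower-level set (obtained from the inductive construction). Care is needed to keep everything uniform so that $T$ comes out recursive and $f$ computable — this is bookkeeping but not conceptually hard, and it is exactly why the particular $\forall p \exists q$ presentation was chosen.

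The main obstacle, and the place I would spend the most care, is the ``if'' direction's effectivity: extracting from a merely recursive well-founded tree $T$ a recursive ordinal bound and a uniformly computable assignment of $\mathcal O$-notations to nodes, so that the transfinite recursion defining the $B_t$ can be carried out \emph{hyperarithmetically} rather than just $\Delta^1_1$-in-the-limit. This is where the Kleene/Spector effective well-foundedness machinery enters: the rank function of a recursive well-founded tree is itself hyperarithmetic and bounded by a recursive ordinal, and one must invoke this (with appropriate uniformity) to conclude that $B(T,f)$ sits at a fixed recursive level of the effective Borel hierarchy. Once that is in hand, both directions close, and together they give the stated equivalence. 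I would cite \cite{sacks} for the underlying facts about $\mathcal O$, effective transfinite recursion, and the effective Borel hierarchy, and present the construction in the two matching recursions sketched above.
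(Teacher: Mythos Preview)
Your sketch is essentially correct and follows the standard route one finds in the references the paper cites. Note, however, that the paper does not actually prove this theorem: it simply records it as a known characterization, pointing to the equivalence of hyperarithmetic with $\Delta^1_1$ and to the Kleene Separation Theorem (citing \cite[Chapter~II]{sacks} and \cite[Theorem~27.1]{miller}). So there is no paper-proof to compare against beyond the citation.

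What you have written is a reasonable unpacking of those references. Two small remarks. First, in the ``if'' direction you correctly flag the only genuine issue, namely that a recursive well-founded tree has rank below $\omega_1^{CK}$, so the effective transfinite recursion along $T$ lands the set at a recursive level of the effective hierarchy; this is exactly the content of the Kleene--Spector analysis you invoke. Second, in the ``only if'' direction your sketch of matching the $\forall p\,\exists q$ clause to successor and limit steps is fine in spirit, but the cleanest way to see it is via Kleene separation: any $\Delta^1_1$ set is obtained from a pair of disjoint $\Sigma^1_1$ sets separated by a set with a recursive Borel code, which is precisely what the paper's citation is gesturing at. Either route works; the paper simply defers to the literature rather than carrying out the recursion you describe.
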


From this characterization, we see that it will suffice to consider three types of inductive steps as described in Lemma~\ref{lem:proper_union}, Lemma~\ref{lem:proper_intersection}, and Lemma~\ref{lem:proper_limit}. We begin by considering the case of a $\Sigma^0_3$ set because it allows us to produce slightly better complexity bounds, as discussed later in this section, and introduces key ideas used in the subsequent proofs. 

In the following, we will say that $e$ is \emph{an index for an enumeration} of the c.e.\ set $W$ if $\ran \phi_e = W$.    We will repeatedly utilize the fact that we can effectively enumerate the $E^{\jump}$-classes of the c.e. supersets of a given set, i.e., $\{[e]_{E^{\jump}} : e \supseteq_{E} e_0\} = \{[W_i \cup e_0]_{E^{\jump}} : i \in \N\}$, where we use $W_i \cup e_0$ to denote an index for an enumeration of $\ran \phi_{e_0} \cup W_i$. The analogous statement with $\subseteq_E$ replacing $\supseteq_E$ does not hold, as illustrated in Proposition~\ref{prop:counterexample} below, which is why we repeat this process twice to handle existential quantification.  Recall that $=^{ce}$ is computably bireducible with $\id^{\jump}$.

\begin{lemma}
\label{lem:proper_base}
Let $P$ be $\Sigma^0_3$. Then $P$ is subset-reducible to $\left(=^{ce}\right)^{\jump}$.
\end{lemma}

\begin{proof}
Choose $i_0$ with
 $P(n) \iff \exists q \forall m\ \phi_{i_0}(\langle q,m,n\rangle) \downarrow$,
 so that
 \[ P(n) \iff  \exists q\  \{ \langle q,m,n \rangle : m \in \N \} \subset W_{i_0} .\]
 Letting $W_{g(q,n)} = W_{i_0} \cup \{ \langle q,m,n\rangle : m \in \N\}$, we then have
 \[ P(n) \iff \exists q\  W_{g(q,n)} = W_{i_0} ,\]
 with  $W_{g(q,n)} \supset W_{i_0}$ for all $q$ and $n$. Then
 \[ P(n)  \iff \exists q\  \{W_i \cup W_{g(q,n)} : i \in \N\} = \{W_i \cup W_{i_0} : i \in \N \}, \]
 with $\{W_i \cup W_{g(q,n)} : i \in \N\} \subset \{W_i \cup W_{i_0} : i \in \N \}$ for all $q$ and $n$. Hence
 \[ P(n) \iff \{ W_i \cup W_{g(q,n)} : i \in \N \wedge q \in \N\} = \{W_i \cup W_{i_0} : i \in \N \}, \]
 with $\{ W_i \cup W_{g(q,n)} : i \in \N \wedge q \in \N\} \subset \{W_i \cup W_{i_0} : i \in \N \}$ for all $q$ and $n$, and equality holding only when there is $q$ with  $W_{g(q,n)} = W_{i_0}$. 
 
 Let $h(n)$ be such that $\phi_{h(n)}(\langle i,q\rangle)$ is an index for an enumeration of $W_i \cup W_{g(q,n)}$ and let $e_0$ be such that $\phi_{e_0}(i)$ is an index for an enumeration of $W_i \cup W_{i_0}$. Then we have
 $P(n) \iff h(n)  \mathrel{\left(=^{ce}\right)^{\jump}} e_0$,
with $h(n) \subseteq_{=^{ce}} e_0$ for all $n$.
 \end{proof}

\begin{lemma}
\label{lem:proper_union}
Suppose $Q$ is subset-reducible to $E^{\jump}$, and $P(n) \iff \exists q\  Q(\langle q,n\rangle)$. Then $P$ is subset-reducible to $E^{\jump\jump\jump}$. Moreover, there are computable functions $\Psi$ and $\chi$ so that if $(\phi_i,d_0)$ is a subset-reduction from $Q$ to $E^{\jump}$, then $(\phi_{\Psi(i)}, \chi(d_0))$ is a subset-reduction from $P$ to $E^{\jump \jump \jump}$.
\end{lemma}

\begin{proof}
   Let $(f,d_0)$ be a subset-reduction from $Q$ to $E^{\jump}$. We then have:   
   \begin{align*}
      P(n) &\iff \exists q\  Q(\langle q,n\rangle) \\
      & \iff \exists q\ f(\langle q,n \rangle) \mathrel{E^{\jump}} d_0 \\
      & \iff \exists q\ \{ [m]_E : m \in \ran \phi_{f(\langle q,n \rangle)} \} = \{ [m]_E : m \in \ran \phi_{d_0}\} \\
      & \iff \exists q\ \{[e]_{E^{\jump}} : e \supseteq_E f(\langle q,n\rangle) \} = \{[e]_{E^{\jump}} : e \supseteq_E d_0\},
    \end{align*}
   with $\{[e]_{E^{\jump}} : e \supseteq_E f(\langle q,n\rangle) \} \supset \{[e]_{E^{\jump}} : e \supseteq_E d_0\}$ for all $q$ and $n$. Let $j$ be such that $\phi_{j(n,q)}(i)$ is an index for an enumeration of $W_i \cup \ran \phi_{f(\langle q,n\rangle)}$ for each $n$, $q$, and $i$, and let $j_0$ be such that $\phi_{j_0}(i)$ is an index for an enumeration of $W_i \cup \ran \phi_{d_0}$ for each $i$. Then we have
   \[ P(n) \iff \exists q\ j(n,q) \mathrel{E^{\jump \jump}} j_0 ,\]
   with $j(n,q) \supseteq_{E^{\jump}} j_0$ for all $n$ and $q$. Hence
   \[ P(n) \iff \exists q\ \{[e]_{E^{\jump \jump }} : e \supseteq_{E^{\jump}} j(n,q) \} = \{[e]_{E^{\jump \jump }} : e \supseteq_{E^{\jump}} j_0\},\]
   with $\{[e]_{E^{\jump \jump }} : e \supseteq_{E^{\jump}} j(n,q) \} \subset \{[e]_{E^{\jump \jump }} : e \supseteq_{E^{\jump}} j_0\}$ for all $n$ and $q$. Hence we also have $\{[e]_{E^{\jump \jump }} :\exists q\ e \supseteq_{E^{\jump}} j(n,q) \} \subset \{[e]_{E^{\jump \jump }} : e \supseteq_{E^{\jump}} j_0\}$ for all $n$, and we claim that
   \[ P(n) \iff \{[e]_{E^{\jump \jump }} : \exists q\ e \supseteq_{E^{\jump}} j(n,q) \} = \{[e]_{E^{\jump \jump }} : e \supseteq_{E^{\jump}} j_0\}.\]
   To see this, note if equality holds then $[j_0]_{E^{\jump}}$ must be an element of the left-hand set, so there must be $q_0$ with $j_0 \supseteq_{E^{\jump}} j(n,q_0)$.  Since $j(n,q) \supseteq_{E^{\jump}} j_0$ for all q, we we thus have $j(n,q_0) \mathrel{E^{\jump \jump}} j_0$, so that $P(n)$ holds.
   
   Finally, let $h$ be such that $\phi_{h(n)}(\langle i,q\rangle)$ is an index for an enumeration of $W_i \cup \ran \phi_{j(n,q)}$ for each $n$, $q$, and $i$, an let $e_0$ be such that $\phi_{e_0}(i)$ is an index for an enumeration of $W_i \cup \ran \phi_{j_0}$ for each $i$. Then $h(n) \subseteq_{E^{\jump \jump}} e_0$ for each $n$, and $P(n) \iff h(n) \mathrel{E^{\jump \jump \jump}} e_0$, so that $(h,e_0)$ is a subset-reduction of $P$ to $E^{\jump \jump \jump}$. The construction from $h$ and $e_0$ is uniform in $f$ and $d_0$, so we can produce the functions $\Psi$ and $\chi$ as described.
\end{proof}

\begin{lemma}
\label{lem:proper_intersection}
Suppose $E \times \id \leq E$, $Q$ is subset-reducible to $E^{\jump}$, and $P(n) \iff \forall p  Q(\langle p,n\rangle)$. Then $P$ is subset-reducible to $E^{\jump}$. Moreover, there are computable functions $\Psi$ and $\chi$ so that if $(\phi_i,d_0)$ is a subset-reduction from $Q$ to $E^{\jump}$, then $(\phi_{\Psi(i)}, \chi(d_0))$ is a subset-reduction from $P$ to $E^{\jump}$.
\end{lemma}

\begin{proof}
  Let $(f,d_0)$ be a subset-reduction from $Q$ to $E^{\jump}$, and let $g$ be a reduction from $E \times \id$ to $E$. Define $h$ so that $h(n)$ is an index for an enumeration of 
  $ \{ g(m,p) : m \in \ran \phi_{f(\langle p,n \rangle)} \wedge p \in \N\} $
  and let $e_0$ be an index for an enumeration of
  $ \{ g(m,p) : m \in \ran \phi_{d_0} \wedge p \in \N\}$.
  For all $n$ and $p$ we have $f(\langle p,n\rangle) \subseteq_E d_0$, so that $h(n) \subseteq_E e_0$, and for all $n$ we have:
  \begin{align*}
      P(n) &\iff \forall p\  Q(\langle p,n\rangle) \\
      & \iff \forall p\ f(\langle p,n \rangle) \mathrel{E^{\jump}} d_0 \\
      & \iff \forall p\ \{ [m]_E : m \in \ran \phi_{f(\langle p,n \rangle)} \} = \{ [m]_E : m \in \ran \phi_{d_0}\} \\
      & \iff \{ [(m,p)]_{E \times \id} : m \in \ran \phi_{f(\langle p,n \rangle)} \wedge p \in \N\} = \\
      &\qquad\qquad\qquad \{ [(m,p)]_{E \times \id} : m \in \ran \phi_{d_0} \} \\
      & \iff \{ [g(m,p)]_E : m \in \ran \phi_{f(\langle p,n \rangle)} \wedge p \in \N\} = \\
      &\qquad\qquad\qquad \{ [g(m,p)]_E : m \in \ran \phi_{d_0} \wedge p \in \N \} \\
      & \iff h(n) \mathrel{E^{\jump}} e_0 ,
  \end{align*}
  so that $(h,e_0)$ is a subset-reduction from $P$ to $E^{\jump}$. The construction of $h$ and $e_0$ is uniform, so we can produce the functions $\Psi$ and $\chi$ as described.
\end{proof}

\begin{lemma}
\label{lem:proper_limit}
Suppose $a=3 \cdot 5^e \in \mathcal{O}$, and for each $n$ we have that $(h_n,e_n)$ is a subset-reduction from $A^{[n]}=\{p\in\N:\langle n,p\rangle\in A\}$ to $(E^{\jump \phi_e(n)})^{\jump}$, with the sequences $\langle h_n \rangle_{n \in \N}$ and $\langle e_n \rangle_{n \in \N}$ computable. Then $A$ is subset-reducible to $(E^{\jump a})^{\jump}$. Moreover, there are computable functions $\Psi$ and $\chi$ so that $(\Psi(\langle h_n \rangle_{n \in \N}), \chi(\langle e_n \rangle_{n \in \N}))$ provides the subset-reduction.
\end{lemma}

\begin{proof}
  Define $h$ so that for each $n$ and $p$, $h(\langle n,p\rangle)$ is an index for an enumeration of
  $ \{\langle n, q \rangle : q \in \ran \phi_{h_n(p)} \} \cup \{ \langle m, q \rangle : q \in \ran \phi_{e_m} \wedge m \neq n\}$ ,
  and let $e$ be an index for an enumeration of $\{ \langle m, q \rangle : q \in \ran \phi_{e_m} \wedge m \in \N \}$. Then $(h,e)$ provides the desired subset-reduction since $\{\langle n, q \rangle : q \in \ran \phi_{h_n(p)} \} \subseteq_{E^{\jump a}} \{ \langle n, q \rangle : q \in \ran \phi_{e_n}\}$ for all $n$ and $p$, with $\{\langle n, q \rangle : q \in \ran \phi_{h_n(p)} \} \mathrel{(E^{\jump a})^{\jump}} \{ \langle n, q \rangle : q \in \ran \phi_{e_m}\}$ iff $p\in A^{[n]}$. The existence of $\Psi$ and $\chi$ is clear.
\end{proof}

We now prove the key result for establishing properness of the jump.

\begin{theorem}
\label{thm:key_proper}
For each hyperarithmetic set $B$ there is $a \in \mathcal{O}$ with $B \leq_m \id^{\jump a}$.
\end{theorem}

\begin{proof}
  We will show that for each computable Borel code $(T,f)$ there is $a_T \in \mathcal{O}$ so that $B(T,f) \leq_m \id^{\jump a_T}$.
  For notational convenience we let $B_t=B_t(T,f)$ for $t \in T$. We will recursively define $a_t \in \mathcal{O}$ and let $E_t = \id^{\jump a_t}$ and establish by effective induction on $t \in T$ that $B_t$ is subset-reducible to $E_t^{\jump}$ via $(h_t,e_t)$, with computable maps $t \mapsto a_t$, $t \mapsto h_t$, and $t \mapsto e_t$.
  
For $t$ a terminal node we have $B_t=\ran \phi_{f(t)}$ and we set $a_t=1$ so $E_t=\id$ and $E_t^{\jump}$ is bireducible with $=^{ce}$. Fix a single $e_t$ for all terminal $t$ so that $\ran \phi_{e_t}=\N$, and let $h_t(n)$ be such that $\ran \phi_{h_t(n)} = \N$ if $n \in \ran \phi_{f(t)}$ and $\ran \phi_{h_t(n)} = \emptyset$ if $n \notin \ran \phi_{f(t)}$. Then $(h_t,e_t)$ is a subset-reduction from $B_t$ to $E_t^{\jump}$.

Now let $t$ be a non-terminal node, and assume $a_{t \smallfrown \langle p,q \rangle}$, $h_{t \smallfrown \langle p,q \rangle}$, and $e_{t \smallfrown \langle p,q \rangle}$ have been defined for all $t \smallfrown \langle p,q \rangle \in T$. Fix a computable pairing function $(x,y) \mapsto \langle x,y \rangle$ with computable coordinate functions $(\langle x,y \rangle)_0 =x$ and $(\langle x,y \rangle)_1 =y$, and so that $\langle 0,0 \rangle =0$. Define $R_t$ so that $R_t(\langle q, \langle p,n \rangle \rangle) \iff B_{t \smallfrown \langle p,q \rangle}(n)$, so that $B_t(n) \iff \forall p \exists q\ R_t(\langle q, \langle p,n \rangle \rangle)$.

We first adjust ordinal ranks to produce an increasing sequence so that we can take their supremum in $\mathcal{O}$. Let $\tilde{a}_{t,0}=a_{t \smallfrown \langle 0,0 \rangle}$ and let
\[ \tilde{a}_{t,m+1} = \tilde{a}_{t,m} +_{\mathcal{O}} a_{t \smallfrown \langle (m)_0,(m)_1 \rangle} +_{\mathcal{O}} 2, \]
where $+_{\mathcal{O}}$ is addition in $\mathcal{O}$. Then let $\tilde{a}_{t} = 3 \cdot 5^{i_{t}}$ where $\phi_{i_{t}}(m)= \tilde{a}_{t,m}$ for all $n$.
Observe that if $\psi: E \leq F$ then the map $\tilde{\psi}\colon E^{\jump} \leq F^{\jump}$ as produced in the proof of Proposition~\ref{prop:monotone}(c) will satisfy $e \subseteq_E e' \iff \tilde{\psi}(e) \subseteq_F \tilde{\psi}(e')$. Hence we can uniformly replace $E_{t \smallfrown \langle p,q \rangle}$, $e_{t \smallfrown \langle p,q \rangle}$, and $h_{t \smallfrown \langle p,q\rangle} $ by $\id^{\jump  \tilde{a}_{t,\langle p,q \rangle}}$, a corresponding $\tilde{e}_{t \smallfrown \langle p,q \rangle}$, and a corresponding map $\tilde{h}_{t \smallfrown \langle p,q \rangle}$, respectively, while maintaining the conditions for subset-reductions.

Letting $A_t$ be such that $A_t^{(m)}=B_{t \smallfrown \langle (m)_0,(m)_1\rangle}$ for each $m$, we then can effectively produce a subset-reduction from $A_t$ to $(\id^{\jump \tilde{a}_t})^{\jump}$ by Lemma~\ref{lem:proper_limit}. Since $A_t$ is computably isomorphic to $R_t$ in a uniform way, we can do the same for $R_t$. Letting $S_t(m) \iff \exists q\ R_t(\langle q,m \rangle)$, we then uniformly produce a subset-reduction from $S_t$ to $(\id^{\jump \tilde{a}_t})^{\jump \jump \jump}$ by Lemma~\ref{lem:proper_union}. Recalling that $\id^{\jump a} \times \id \leq \id^{\jump a}$ for all $a \in \mathcal{O}$ by Corollary~\ref{cor:absorbs_id}, we can then apply Lemma~\ref{lem:proper_intersection} to effectively obtain a subset reduction $(h_t,e_t)$ from $B_t$ to $(\id^{\jump \tilde{a}_t})^{\jump \jump \jump}$. Letting $a_t= \tilde{a}_t +_{\mathcal{O}} 2^2$, this completes the induction step for $t$.
\end{proof}

We are now ready to conclude the proof of Theorem~\ref{thm:fixed_points}. Since the hyperarithmetic sets have no hyperarithmetic upper bound in terms of $m$-reducibility, this gives the main theorem of the section, Theorem~\ref{thm:proper}, as an immediate corollary.

\begin{proof}[Proof of Theorem~\ref{thm:fixed_points}]
  Suppose $E^{\jump} \leq E$. By Proposition~\ref{prop:monotone}(b) we can assume that $E$ has infinitely many classes. Thus by Proposition~\ref{prop:double_plus} we have $\id \leq E^{\jump\jump} \leq E$. Hence by Proposition~\ref{prop:closed} we have $\id^{\jump a} \leq E^{\jump a} \leq E$ for all $a \in \mathcal{O}$. But now by Theorem~\ref{thm:key_proper}, every hyperarithmetic set is $m$-reducible to $\id^{\jump a}$ for some $a \in \mathcal{O}$, and hence $m$-reducible to $E$. 
\end{proof}

The proof of Theorem~\ref{thm:key_proper} does not give optimal bounds on the number of iterates of the jump required. With a bit more care, we can show that every $\Pi^0_{\alpha}$ set is reducible to $\id^{\jump a}$ for some $a \in \mathcal{O}$ with $|a|=\alpha$.
 We believe that the optimal bound should be that every $\Pi^0_{2 \cdot \alpha}$ set is reducible to $\id^{\jump a}$ for some $a \in \mathcal{O}$ with $|a|=\alpha$. 
 Lemma~\ref{lem:proper_base} and Lemma~\ref{lem:proper_intersection} show that $\left(=^{ce}\right)^{\jump}$ (and hence $\id^{\jump \jump}$) is $\Pi^0_4$-complete, and we can show by an \emph{ad hoc} argument that $\left(=^{ce}\right)^{\jump\jump}$ is $\Pi^0_6$-complete.  The difficulty is that our induction technique requires two iterates of the jump at each step in order to reverse the direction of set containment twice.
We would prefer to use $\subseteq_E$ rather than $\supseteq_E$ throughout, but we do not see how to effectively enumerate c.e.\ subsets of a given c.e.\ set up to $E^{\jump}$-equivalence, whereas we can enumerate c.e.\ supersets. The natural attempt to do this fails as shown in the following example.

\begin{proposition}
\label{prop:counterexample}
There are $E$ and $e_0$ so that $\{[e]_{E^{\jump}} : e \subseteq_{E} e_0\} \neq \{[W_i \cap e_0]_{E^{\jump}} : i \in \N\}$, where $W_i \cap e_0$ denotes an index for an enumeration of $\ran \phi_{e_0} \cap W_i$.
\end{proposition}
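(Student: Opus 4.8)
The plan is to exhibit an explicit non-c.e.\ equivalence relation $E$ together with a c.e.\ set $\ran \phi_{e_0}$ that meets every $E$-class and on which the ``class-of'' function is computable. The point will be that all the complexity of $E$ lives \emph{outside} $\ran \phi_{e_0}$, so that a c.e.\ set $B$ disjoint from $\ran \phi_{e_0}$ realizes, up to $E^{+}$, a collection of $E$-classes that no c.e.\ \emph{subset} of $\ran \phi_{e_0}$ can. Concretely, letting $K$ be the halting set, I would define $g \colon \N \to \N$ by $g(2n)=2n$, and $g(2n+1)=0$ if $n \in K$ while $g(2n+1)=2n$ if $n \notin K$, and set $a \mathrel{E} b \iff g(a)=g(b)$. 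Since $g$ takes only even values, the $E$-classes are the sets $D_{2n}=g^{-1}(2n)$, each of which contains the even number $2n$; so if $\ran \phi_{e_0}$ is the computable set of even numbers then $\ran \phi_{e_0}$ meets every $E$-class exactly once and $g$ agrees with the identity on $\ran \phi_{e_0}$. Moreover $E$ is not c.e.: one checks $(2n,2n+1) \in E \iff n \in \overline{K} \cup \{0\}$, so $E$ c.e.\ would make $\overline{K}$ c.e.

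Next I would take $B$ to be the computable set of odd numbers and let $e$ be an index enumerating it. Each odd number $2n+1$ lies in $D_{g(2n+1)}$, and $g(2n+1)$ is even, hence a value of $\phi_{e_0}$ in the same class; thus $e \subseteq_E e_0$, and $[e]_{E^{+}}$ lies in the left-hand side. The set of $E$-classes met by $B$ is $g[B]=\{0\} \cup \{2n : n \notin K\}$, which is contained in the set of all (even) class indices but is not c.e.\ (intersecting it with $\{2,4,6,\dots\}$ would make $\overline{K}$ c.e.).

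To finish I would show $[e]_{E^{+}}$ is not on the right-hand side. For each $i$, the set $\ran \phi_{e_0} \cap W_i$ is a c.e.\ subset of the even numbers, and since $g$ is the identity there, the collection of $E$-classes it meets equals $\ran \phi_{e_0} \cap W_i$ itself, a c.e.\ set. Two indices are $E^{+}$-equivalent precisely when they determine the same collection of $E$-classes, so $[e]_{E^{+}}=[W_i \cap e_0]_{E^{+}}$ would force the non-c.e.\ set $g[B]$ to equal the c.e.\ set $\ran \phi_{e_0} \cap W_i$, which is impossible. Hence the two sets in the statement differ.

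The step that takes real thought is not a computation but the design of the example. If $E$ were c.e., then for any $e \subseteq_E e_0$ one could enumerate inside $\ran \phi_{e_0}$ a set of representatives of exactly the classes met by $\ran \phi_e$ (using that $E$ is c.e.), which would place $[e]_{E^{+}}$ on the right-hand side; so the merging in the definition of $g$ must be controlled by a genuinely non-c.e.\ condition, and it must be arranged so that every class it creates already appears in $\ran \phi_{e_0}$. This is exactly why $\overline{K}$ rather than $K$ is used, and why $g$ is made to take only even values.
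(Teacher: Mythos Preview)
Your argument is correct. You build an equivalence relation $E$ on $\N$ whose classes are indexed by the even numbers, arrange that $\ran\phi_{e_0}=2\N$ is a transversal on which $E$ restricts to equality, and then observe that the odd numbers, though $\subseteq_E e_0$, pick out exactly the classes $\{0\}\cup\{2k:k\notin K\}$, a non-c.e.\ set of transversal representatives which therefore cannot equal any $W_i\cap\ran\phi_{e_0}$. The verification that $e\subseteq_E e_0$ and the final comparison of class-sets are both sound, and your closing remark that the construction must fail for c.e.\ $E$ is also right (and explains why the merging is driven by $\overline K$).

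The paper takes a different route: it fixes $E=\mathord{=^{ce}}$, i.e., $\id^{+}$ itself, and uses a pair of c.e.\ sets $A\subset B$ with $B\setminus A$ not c.e.\ to build $e_0$ and $e$ as enumerations of \emph{indices} of small finite sets; the point is that recovering which indices in $\ran\phi_{e_0}$ are $=^{ce}$-equivalent to something in $\ran\phi_e$ would decide $B\setminus A$. The paper's choice matters contextually: since the proposition is meant to explain why the induction in Lemma~\ref{lem:key_proper} cannot work with $\subseteq_E$ in place of $\supseteq_E$, exhibiting the failure already at $E=\id^{+}$ shows the obstruction lives inside the jump hierarchy they actually use. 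Your $E$ is simpler and the argument is more transparent, but it is a bespoke relation not obviously of the form $\id^{+a}$, so it proves the proposition as stated while saying slightly less about the specific setting of the paper.
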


\begin{proof}
Let $E$ be $=^{ce}$, and let $A \subset B$ be c.e.\ sets with $B-A$ not c.e. Let $e_0$ be such that
\[ \ran \phi_{\phi_{e_0}(j)} = \begin{cases} \{k\} & \text{if $j=2k+1$} \\ \{k,k+1\} & \text{if $j=2k+2$} \\ \emptyset & \text{if $j=0$} \end{cases} \]
and let $e$ be such that 
\[ \ran \phi_{\phi_{e}(k)} = \begin{cases} \{k\} & \text{if $k \in B-A$} \\ \{k,k+1\} & \text{if $k \in A$} \\ \emptyset & \text{if $k \notin B$} \end{cases} .\]
Then $e \subseteq_E e_0$ but there is no $i$ with $e \mathrel{E^{\jump}} e_0 \cap W_i$. For if there were, we would have $k \in B-A$ iff $\exists x (x \in W_i \wedge x = \phi_{e_0}(1+2k))$ so that $B-A$ would be c.e.
\end{proof}

We have shown that the computable FS-jump of a hyperarithmetic equivalence relation is always strictly above the relation, so there are no hyperarithmetic fixed points up to bireducibility. If we consider non-hyperarithmetic equivalence relations we can find fixed points of the jump.

\begin{definition}
Let $\cong_{\mathcal T}$ be the isomorphism relation on computable trees.
\end{definition}

Here we can use any reasonable coding of computable trees by natural numbers. Then $\cong_{\mathcal{T}}$ is a $\Sigma_1^1$ equivalence relation which is not hyperarithmetic.
In \cite[Theorem 2]{fokina-friedman-etal} it was shown that $\cong_{\mathcal{T}}$ is $\Sigma_1^1$ complete for computable reducibility, that is, $\cong_{\mathcal{T}}$ is $\Sigma_1^1$ and for every $\Sigma_1^1$ equivalence relation $E$, $E\leq \cong_{\mathcal T}$. We can see that $\cong_{\mathcal T}$ is a jump fixed point, i.e., $\cong_{\mathcal T}^{\jump}$ is computably bireducible with $\cong_{\mathcal T}$. More generally:

\begin{proposition}
Any $\Sigma^1_1$ or $\Pi^1_1$ complete equivalence relation $E$ is a jump fixed point, i.e., $E^{\jump}$ is computably bireducible with $E$.
\end{proposition}

\begin{proof}
 It suffices to show that  $E^{\jump}$ is $\Sigma_1^1$ (resp. $\Pi^1_1$) for any $\Sigma^1_1$ (resp. $\Pi^1_1$ equivalence relation $E$. This follows immediately from the fact that $E^{\jump}$ is a conjunction of $E$ with additional natural number quantifiers. 
\end{proof}

\begin{corollary}
$\cong_{\mathcal T}$ is a jump fixed point.
\end{corollary}

We note that although every hyperarithmetic set is many-one reducible to $\id^{\jump a}$ for some $a \in \mathcal{O}$, we do not know whether every hyperarithmetic equivalence relation $E$ satisfies $E \leq \id^{\jump a}$ for some $a \in \mathcal{O}$.

\section{Ceers and the jump}

Recall from the introduction that $E$ is called a \emph{ceer} if it is a computably enumerable equivalence relation. In this section, we study the relationship between the computable FS-jump and the ceers.

We begin with the following upper bound on the complexity of the computable FS-jump of a ceer. In the statement, recall that if $E$ is an equivalence relation and $W\subset\N$, then $W$ is said to be \emph{$E$-invariant} if it is a union of $E$-equivalence classes.

\begin{proposition}
  \label{prop:upperbound}
  If $E$ is a ceer, then $E^{\jump}\leq\mathord{=}^{ce}$. Moreover, we can find a reduction whose range is contained in the set $\{e\in\N : W_e\text{ is $E$-invariant}\}$.
\end{proposition}

\begin{proof}
  We define a computable function $f$ such that $W_{f(e)}=[\ran\phi_e]_E$. To see that there is such a computable function $f$, one can let $f(e)$ be a program which, on input $n$, searches through all triples $(a,b,c)$ such that $a\in\ran\phi_e$ and $(b,c)\in E$, and halts if and when it finds a triple of the form $(a,a,n)$. Since it is clear that $e\mathrel{E^{\jump}}e'$ if and only if $[\ran\phi_e]_E=[\ran\phi_{e'}]_E$, we have that $f$ is a computable reduction from $E^{\jump}$ to $=^{ce}$.
  It is immediate from the construction that the range of $f$ is contained in $\{e\in\N :  W_e\text{ is $E$-invariant}\}$. 
\end{proof}

The next result gives a lower bound on the complexity of the computable FS-jumps of a ceer.

\begin{theorem}
  \label{thm:lowerbound}
  If $E$ is a ceer with infinitely many equivalence classes, then $\id<E^{\jump}$.
\end{theorem}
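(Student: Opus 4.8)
The plan is to prove $\id\le E^+$ and $E^+\not\le\id$ separately; the first is where the work lies, since $E$ may be \emph{dark}, i.e.\ admit no infinite c.e.\ set of pairwise $E$-inequivalent elements, so we cannot build a computable transversal and appeal to $\id\le E$. Instead I would use computable enumerability of $E$ directly. Fix an enumeration $E=\bigcup_s E_s$ as an increasing union of finite equivalence relations on $\{0,\dots,s\}$, and let $M_s=\{m\le s:(k,m)\notin E_s\text{ for all }k<m\}$ be the set of $E_s$-minimal elements; let $M$ be the set of genuine $E$-minimal elements (the least elements of the $E$-classes), so that $m\in M$ iff $m\in M_s$ for all $s\ge m$, and $M$ is infinite because $E$ has infinitely many classes. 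Write $m_0<m_1<\cdots$ for the enumeration of $M$ and $m^s_0<m^s_1<\cdots$ for that of $M_s$. Two elementary facts drive the construction: $m^s_j\le m_j$ whenever $m^s_j$ is defined (since $M\cap\{0,\dots,s\}\subseteq M_s$), and for each fixed $j$ the value $m^s_j$ stabilizes to $m_j$ as $s\to\infty$ (since $M_s\cap\{0,\dots,m_j\}$ is nonincreasing in $s$ with limit $\{m_0,\dots,m_j\}$).

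I would then take $f$ computable with $\phi_{f(n)}$ enumerating
\[
A_n=\bigcup_{s\,:\,|M_s|\ge n+1}\{\,x\le s:x\mathrel{E_s}m^s_j\text{ for some }j\le n\,\},
\]
and claim that $[A_n]_E=[m_0]_E\cup\cdots\cup[m_n]_E$. For $\supseteq$: if $x\mathrel{E}m_j$ with $j\le n$, then at any sufficiently large stage $s$ we have $x\le s$, $x\mathrel{E_s}m_j$, $m^s_j=m_j$, and $|M_s|\ge n+1$, so $x$ enters $A_n$. For $\subseteq$: if $x\in A_n$ then $x\mathrel{E_s}m^s_j$ for some $s$ and some $j\le n$, hence $x\mathrel{E}m^s_j$, so the least element of $[x]_E$ is at most $m^s_j\le m_j\le m_n$ and lies in $M$, hence is one of $m_0,\dots,m_n$. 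Since the sets $[m_0]_E\cup\cdots\cup[m_n]_E$ form a strictly increasing chain as $n$ grows, $n\mapsto f(n)$ is a computable reduction of $\id$ to $E^+$.

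For $E^+\not\le\id$ I would argue by contradiction. If $E^+\le\id$, then $E\le E^+\le\id$ by Proposition~\ref{prop:monotone}(a), so $E$ is a decidable equivalence relation. A decidable equivalence relation with infinitely many classes has a computable transversal — its set of minimal elements, listed in increasing order — so $\id\le E$, and hence $\id^+\le E^+$ by Proposition~\ref{prop:monotone}(c). But by definition $e\mathrel{\id^+}e'$ iff $\ran\phi_e=\ran\phi_{e'}$, so $\id^+$ is computably bireducible with $=^{ce}$ (this is also the case $n=1$ of Proposition~\ref{prop:idplus}), which is $\Pi^0_2$-complete and so not computably reducible to $\id$. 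Thus $=^{ce}\le E^+\le\id$, a contradiction.

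I expect the crux to be the inclusion $[A_n]_E\subseteq[m_0]_E\cup\cdots\cup[m_n]_E$: one must ensure that a premature, not-yet-stabilized approximation $m^s_j$ cannot drag an unintended $E$-class into $A_n$. The bound $m^s_j\le m_j$ is precisely what blocks this, forcing any class that enters $A_n$ to be among the $n+1$ lowest. The reverse inclusion, the transversal argument, and the fact that $=^{ce}$ is $\Pi^0_2$-complete are all routine.
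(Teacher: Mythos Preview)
Your argument is correct and follows the same two-part skeleton as the paper, but the technical implementation of each part is different in ways worth noting. For $\id\le E^+$, the paper gives a one-line positive inductive definition of a c.e.\ set $A$ (appealing to the recursion theorem for its c.e.-ness) whose columns $A_{(n)}$ are the initial intervals $[0,m_n]$; you instead track the approximations $m^s_j$ to the $E$-minimal elements through a stage-by-stage enumeration and assemble $A_n$ directly. Both constructions produce sets with the same $E$-saturation $[m_0]_E\cup\cdots\cup[m_n]_E$, so nothing is gained or lost mathematically; your version is more explicit and avoids the recursion theorem, while the paper's is terser. For strictness, the paper invokes Theorem~\ref{thm:proper} (properness of the jump for hyperarithmetic $E$) to get $\id<\id^+\le E^+$, whereas you argue directly that $\id^+$ is bireducible with $=^{ce}$ and that the latter is $\Pi^0_2$-complete, hence not $\le\id$. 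Your route is more elementary and self-contained---it does not presuppose the main theorem of Section~3---so the paper's proof is technically circular-looking (though not actually circular, since Theorem~\ref{thm:proper} does not depend on Theorem~\ref{thm:lowerbound}), while yours makes the result stand on its own.
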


\begin{proof}
  We first show that $\id\leq E^{\jump}$. To do so, we first define an auxilliary set of pairs $A$ recursively as follows: Let $(n,j)\in A$ if and only if for every $i<j$ there exists $m<n$ and $(m,i')\in A$ such that $i\mathrel{E}i'$. It is immediate from the definition of $A$, the fact that $E$ is c.e., and the recursion theorem that $A$ is a c.e.\ set of pairs.
  
  We observe that each column $A^{[n]}=\{j:(n,j)\in A\}$ of $A$ is an initial interval of $\N$. It is immediate from the definition that the first column $A^{(0)}$ is the singleton $\{0\}$. Next since $E$ has infinitely many classes, we have that each $A^{[n]}$ is bounded. Moreover $A^{[n]}$ is precisely the interval $[0,j]$ where $j$ is the least value that is $E$-inequivalent to every element of $A^{[m]}$ for all $m<n$.
  
  We now define $f$ to be any computable function such that for all $n$, the range of $\phi_{f(n)}$ is precisely $A^{[n]}$. Then as we have seen, $m<n$ implies there exists an element $j$ in the range of $\phi_{f(n)}$ such that $j$ is $E$-inequivalent to everything in the range of $\phi_{f(m)}$. In particular, $f$ is a computable reduction from $\id$ to $E^{\jump}$.
  
  To establish strictness, assume to the contrary that $E^{\jump}\leq\id$. Then since $E\leq E^{\jump}$, by Theorem~\ref{thm:proper} we have $E<\id$, contradicting that $E$ has infinitely many classes.
\end{proof}

In order to put the previous result in context, we pause our investigation of ceers briefly to consider the question of which $E$ satisfy $\id\leq E^{\jump}$. We first note that it follows from Proposition~\ref{prop:double_plus} that if $E$ is itself a jump, then $\id\leq E^{\jump}$. We now show on the other hand that there exist equivalence relations $E$ such that $\id\not\leq E^{\jump}$. To describe such an equivalence relation, we recall the following notation.

\begin{definition}
  If $A\subset\N$ then the equivalence relation $E_A$ is defined by
  \[m\mathrel{E_A}n\iff m=n\text{ or }m,n\in A\text{.}
  \]
\end{definition}

Thus the equivalence classes of $E_A$ are $A$ itself, together with the singletons $\{i\}$ for $i\notin A$. Note that $E_A\leq E_B$ if and only if $A$ is $1$-reducible to $B$ (see for instance \cite[Proposition~2.8]{coskey-hamkins-miller}).

\begin{theorem}
\label{thm:verydarkarithmetic}
  There exists an arithmetic coinfinite set $A$ such that $\id\not\leq E_A^{\jump}$.
\end{theorem}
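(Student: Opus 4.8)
The plan is to construct $A$ by a priority argument, carried out recursively in some fixed jump $\emptyset^{(k)}$ (so that $A$ is arithmetic), satisfying for every $e$ the requirement $R_e$ that $\phi_e$, if total, is not a computable reduction from $\id$ to $E_A^+$, together with side requirements keeping $\overline A$ infinite. The starting point is the observation that, writing $V_n=\ran\phi_{\phi_e(n)}$ (a uniformly c.e.\ sequence once $\phi_e$ is total), one has $\phi_e(n)\mathrel{E_A^+}\phi_e(m)$ if and only if $V_n\triangle V_m\subseteq A$ and ($V_n$ meets $A$ iff $V_m$ does). So to meet $R_e$ it suffices, assuming $\phi_e$ total, to find $n\neq m$ and force $V_n\triangle V_m$, together with a suitable single common element, into $A$.

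The heart of the construction is the following combinatorial fact, which I would isolate as a lemma. \emph{Claim: if $C$ is c.e.\ with $\overline C$ infinite, $F$ is finite, and $(V_n)$ is any uniformly c.e.\ sequence, then there are $n\neq m$ with $\overline{C\cup(V_n\triangle V_m)}$ infinite and $(V_n\triangle V_m)\cap F=\emptyset$.} To prove it, suppose toward a contradiction that $\overline C\subseteq^{*}V_n\triangle V_m$ for all $n\neq m$. Putting $U_n=V_n\cap\overline C$ this gives $U_n\triangle U_m=^{*}\overline C$, hence $U_m=^{*}\overline C\setminus U_n$, for all $n\neq m$. For any three indices $n,m,k$ we then get $U_m=^{*}\overline C\setminus U_n=^{*}U_k$ and $U_k=^{*}\overline C\setminus U_m$, so $U_k=^{*}\overline C\setminus U_k$, whence $U_k$ is finite; by symmetry $U_n$ is finite too, contradicting $U_m=^{*}\overline C\setminus U_n$ and the infinitude of $\overline C$. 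Thus the graph of ``bad pairs'' $\{n,m\}$ (those with $\overline{C\cup(V_n\triangle V_m)}$ finite) contains no triangle, so by the infinite Ramsey theorem (no infinite clique, as there is not even a triangle) there is an infinite set $J$ of indices no two of which form a bad pair; refining $J$ once for each $r\in F$ according to whether $r\in V_n$ yields an infinite $J'\subseteq J$ all of whose pairs additionally satisfy $(V_n\triangle V_m)\cap F=\emptyset$, and any $n\neq m$ in $J'$ work.

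With the claim in hand, the strategy for $R_e$ is: if $\phi_e$ is not total, do nothing. Otherwise let $F$ be the (finite) set of numbers currently reserved for $\overline A$ by higher priority requirements. If infinitely many $V_n$ are contained in $F$, then since $F$ is finite two of them coincide and $R_e$ is met with no commitment; otherwise apply the claim — with $C$ the c.e.\ set of numbers so far committed to $A$ — to the indices $n$ with $V_n\not\subseteq F$, obtaining $n_e\neq m_e$ with $\overline{C\cup(V_{n_e}\triangle V_{m_e})}$ infinite and $(V_{n_e}\triangle V_{m_e})\cap F=\emptyset$. Commit $V_{n_e}\triangle V_{m_e}$ into $A$ (this respects all reservations and, by the first clause, keeps $\overline A$ infinite), and also commit into $A$ the first element enumerated into $(V_{n_e}\cap V_{m_e})\setminus F$, if any; a short case analysis, using that reserved numbers never enter $A$ and that $V_{n_e},V_{m_e}\not\subseteq F$, shows both $V_{n_e}$ and $V_{m_e}$ then meet $A$, so $\phi_e(n_e)\mathrel{E_A^+}\phi_e(m_e)$ and $R_e$ is satisfied. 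After $R_e$ acts one reserves for $\overline A$ the least element of $\overline{C\cup(V_{n_e}\triangle V_{m_e})}$ exceeding all numbers mentioned so far; since all later commitments avoid previously reserved numbers, these reservations form an infinite subset of $\overline A$, so $A$ is coinfinite. (Coinfiniteness is in fact forced a posteriori as well: for finite or cofinite $A$ the relation $E_A$ is a computable equivalence relation with infinitely many classes, so $\id\leq E_A^+$ by Theorem~\ref{thm:lowerbound}, contradicting the $R_e$.) Finally, since each commitment is a symmetric difference of two c.e.\ sets and hence uniformly $\Delta^0_2$, the oracle needed at stage $e$ to decide totality, to perform the case split, and to search for $(n_e,m_e)$ has a fixed arithmetic complexity independent of $e$, so the resulting $A$ is arithmetic.

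The main obstacle is the claim: everything else is standard priority bookkeeping, but it is not a priori obvious that a computable attempted reduction can always be defeated by absorbing part of its behavior into a coinfinite $A$, and the Boolean-algebra and Ramsey argument above is precisely what makes this possible.
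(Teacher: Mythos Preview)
Your argument is correct; the core combinatorial fact (among any three of the $V_n$, two have a symmetric difference whose complement---relative to the current approximation of $\overline{A}$---is still infinite) is exactly the observation the paper uses, stated there as
\[
  (V_{i_0}\triangle V_{i_1})^c \cup (V_{i_0}\triangle V_{i_2})^c \cup (V_{i_1}\triangle V_{i_2})^c=\N.
\]
The paper packages the construction as Mathias forcing: conditions $(s,B)$ with the generic real giving $A^c$, and three families of dense sets corresponding to (a) making $A^c$ infinite, (b) making $A$ meet every infinite c.e.\ set, and (c) your diagonalization against each computable $f$. Your priority construction is a direct unwinding of this forcing, with one substantive organisational difference: the paper imposes the \emph{global} condition (b) so that in the verification it may assume, after throwing away finitely many indices, that every $V_n$ meets $A$; you instead handle this \emph{locally} at each stage by committing a single element of $(V_{n_e}\cap V_{m_e})\setminus F$ (and arguing via $V_{n_e}\not\subseteq F$ when the intersection lies inside $F$). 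Either device works, and both approaches yield an $A$ of the same arithmetic complexity (the paper notes $\Sigma^0_4$ suffices).

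Two cosmetic slips worth fixing: (i) you call $C$ ``the c.e.\ set of numbers so far committed,'' but your commitments are symmetric differences of c.e.\ sets and hence only $\Delta^0_2$; your claim's proof never uses c.e.-ness of $C$, so simply drop that adjective. (ii) In the claim, from $U_k=^*\overline C\setminus U_k$ you get directly that $\overline C$ is finite (the two sides are disjoint with union $\overline C$), which is already the contradiction; the detour through ``$U_k$ finite, $U_n$ finite'' is unnecessary, though not wrong.
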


\begin{proof}
  Let $P$ be the Mathias forcing poset, that is, $P$ consists of pairs $(s,B)$ where $s\subset\N$ is finite, $B\subset\N$ is infinite, and every element of $s$ is less than every element of $B$. The ordering on $P$ is defined by $(s,B)\leq(t,C)$ if $s\supset t$, $B\subset C$, and $s\setminus t\subset C$.
  
  We first show that if $A^c$ is sufficiently Mathias generic, then $A$ satisfies $\id\not\leq E_A^{\jump}$. In order to do so, let $f$ be any total function so that the sets $\ran\phi_{f(i)}$ are pairwise distinct. Define:
  \begin{align*} 
  D_f = \{(s,B)\in P:\,&(\exists i\neq j)\,[ (s\cup B)\cap(\ran\phi_{f(i)}\triangle\ran\phi_{f(j)})=\emptyset \wedge \\
  &\ran\phi_{f(i)} \cap (s \cup B)^c \neq \emptyset \wedge \ran\phi_{f(i)} \cap (s \cup B)^c \neq \emptyset ]\}.
  \end{align*}
  We claim that $D_f$ is dense in $P$. To see this, let $(s,B)$ be given. Repeatedly applying the pigeonhole principle, we can find infinitely many indices $i_n$ such that the sets $\ran\phi_{f(i_n)}$ agree on $s$. Since the sets $\ran\phi_{f(i)}$ are pairwise distinct, there must be three, $i_0,i_1,i_2$, such that each $\ran\phi_{f(i)}$ is not a subset of $s$. Observe that
  \begin{align*}
  \N = &(\ran\phi_{f(i_0)}\triangle\ran\phi_{f(i_1)})^c
  \cup(\ran\phi_{f(i_0)}\triangle\ran\phi_{f(i_2)})^c \cup \\
  &(\ran\phi_{f(i_1)}\triangle\ran\phi_{f(i_2)})^c.
  \end{align*}
  In particular we can suppose without loss of generality that $i_0$ and $i_1$ satisfy that the set $B'=B\cap(\ran\phi_{f(i_0)}\triangle\ran\phi_{f(i_1)})^c$ is infinite. Then $\ran\phi_{f(i_0)}$ and $\ran\phi_{f(i_1)}$ agree on both $s$ and $B'$. Since neither $\ran\phi_{f(i_0)}$ nor $\ran\phi_{f(i_1)}$ is a subset of $s$, we can remove finitely many elements from $B'$ to ensure that $\ran\phi_{f(i_0)} \cap (s \cup B')^c \neq \emptyset$ and $\ran\phi_{f(i_1)} \cap (s \cup B')^c \neq \emptyset$, and so $(s,B')\in D_f$ completing the claim.
  
  Now let $G\subset P$ be a filter satisfying the following conditions:
  \begin{enumerate}
    \item $G$ meets $\{(s,B)\in P:|s|\geq m\}$ for all $m\in\N$.
    \item $G$ meets $D_f$ for all computable functions $f$ so that the sets $\ran\phi_{f(i)}$ are pairwise distinct.
  \end{enumerate}
  This is possible since the sets in condition~(a) are clearly dense, and we have shown that the $D_f$ are dense. We define the set $A$ by declaring that $A^c=\bigcup\{s:(s,B)\in G\}$. Condition~(a) implies that $A^c$ is infinite, and also that $A^c=\bigcap\{s \cup B : (s,B) \in G\}$; we wish to show that $\id\not\leq E_A^{\jump}$. For this we will show that if $f$ is a given computable function, then $f$ is not a reduction from $\id$ to $E_A^{\jump}$.
  
  Assume, toward a contradiction, that $f$ is a reduction from $\id$ to $E_A^{\jump}$. Then the sets $\ran\phi_{f(i)}$ are pairwise distinct, so there is $(s,B) \in G \cap D_f$. Thus there exist $i\neq j$ such that both $\ran\phi_{f(i)}$ and $\ran\phi_{f(j)}$ intersect $(s \cup B)^c$, and $(s\cup B)\cap(\ran\phi_{f(i)}\triangle\ran\phi_{f(j)})=\emptyset$. Hence both $\ran\phi_{f(i)}$ and $\ran\phi_{f(j)}$ intersect $A$, and $A^c\cap(\ran\phi_{f(i)}\triangle\ran\phi_{f(j)})=\emptyset$. This means that $f(i)\mathrel{E_A^{\jump}}f(j)$, so $f$ is not a reduction from $\id$ to $E_A^{\jump}$, as desired.
  
  Finally, we can ensure $A$ is arithmetic by enumerating the dense sets described above, inductively defining a descending sequence $(s_n,B_n)$ meeting the dense sets, and letting $A^c=\bigcup_n s_n = \bigcap_n (s_n \cup B_n)$. More precisely, note that for any condition $(s,B)$, we can find an extension meeting $D_f$ for a suitable $f$ by intersecting $B$ with a $\Delta^0_2$ set, and the set of $i$ so that $f=\varphi_i$ is suitable is $\Pi^0_3$, so the construction of this sequence may be done computably in $0^{(3)}$, from which we can produce an $A$ which is $\Delta^0_4$ 
\end{proof}

This result leaves open the question of what is the least complexity of an equivalence relation $E$ with infinitely many classes such that $\id\not\leq E^{\jump}$.

Returning to ceers, in view of the bounds from Proposition~\ref{prop:upperbound} and Theorem~\ref{thm:lowerbound}, it is natural to ask whether there is a ceer $E$ such that $E^{\jump}$ lies properly between $\id$ and $=^{ce}$. We first see that there is a large collection of ceers whose jumps are bireducible with $=^{ce}$. We recall the following terminology from \cite{andrews-sorbi-joins}:

\begin{definition}
A ceer $E$ is said to be \emph{light} if $\id\leq E$. $E$ is said to be \emph{dark} if $E$ has infinitely many classes but $\id\not\leq E$.
\end{definition}

Thus every ceer satisfies exactly one of finite, light, or dark.

\begin{proposition}
  \label{prop:light-is-high}
  If $E$ is a light ceer then $E^{\jump}$ is computably bireducible with $=^{ce}$.
\end{proposition}

\begin{proof}
  This is an immediate consequence of Propositions~\ref{prop:monotone}(c), \ref{prop:idplus}, and~\ref{prop:upperbound}.
\end{proof}

We will see that there are also dark ceers which satisfy this conclusion. We introduce the following terminology.

\begin{definition}
  We say a ceer $E$ is \emph{high for the computable FS-jump} if $E^{\jump}$ is computably bireducible with $=^{ce}$.
\end{definition}

This generalizes the notion of lightness for ceers, but also implies that the computable FS-jump is as complicated as possible. As there is no least ceer with infinitely many classes, there does not seem to be a natural notion of low for the computable FS-jump.

In order to describe a dark ceer which is high for the computable FS-jump, recall that a c.e.\ set $A\subset\N$ is called \emph{simple} if there is no infinite c.e.\ set contained in $A^c$. Furthermore $A$ is called \emph{hyperhypersimple} if for all computable functions $f$ such that $\{W_{f(n)}:n\in\N\}$ is a pairwise disjoint family of finite sets, there exists $n\in\N$ such that $W_{f(n)}\subset A$. We refer the reader to \cite[Chapter~5]{soare2} for more about these properties, including examples.

\begin{theorem}
  \label{thm:nonhhs-is-high}
  Let $A\subset\N$ be a set which is simple and not hyperhypersimple. Then $E_A$ is a dark ceer and $E_A$ is high for the computable FS-jump.
\end{theorem}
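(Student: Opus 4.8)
The plan is to check the three things the statement requires: that $E_A$ is a ceer with infinitely many classes, that $\id\not\leq E_A$ (so that with the first point $E_A$ is dark), and that $\mathord{=}^{ce}\leq E_A^+$ (so that with Proposition~\ref{prop:upperbound} this shows $E_A$ is high). The first point is routine: $A$ is c.e., so $E_A$ is c.e.\ as a set of pairs, and $A$ is coinfinite since it is simple, so $E_A$ has infinitely many classes (indeed all but one class is a singleton).

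For $\id\not\leq E_A$, I would argue by contradiction. Suppose $f$ is a computable reduction $\id\leq E_A$. Then $f$ is injective, so $\ran f$ is infinite; and since distinct inputs are sent to distinct $E_A$-classes while $A$ is a single $E_A$-class, there is at most one $n$ with $f(n)\in A$, so $\ran f\cap A$ has at most one element. Deleting that element (if any) from the c.e.\ set $\ran f$ leaves an infinite c.e.\ subset of $A^c$, contradicting simplicity of $A$. Hence $E_A$ is a dark ceer.

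For the main point, $\mathord{=}^{ce}\leq E_A^+$, the idea is to use non-hyperhypersimplicity of $A$ directly: by definition it supplies a computable function $f$ such that the sets $W_{f(n)}$ are finite, pairwise disjoint, and each satisfies $W_{f(n)}\not\subseteq A$, equivalently each $W_{f(n)}$ meets $A^c$. I would then let $g$ be a computable function with
\[ \ran\phi_{g(e)}=\bigcup_{n\in W_e}W_{f(n)}, \]
built by dovetailing: whenever $n$ enters $W_e$, start enumerating $W_{f(n)}$. If $W_e=W_{e'}$, then $\phi_{g(e)}$ and $\phi_{g(e')}$ have equal ranges, so $g(e)\mathrel{E_A^+}g(e')$. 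Conversely, if $W_e\neq W_{e'}$, pick $n_0$ in, say, $W_e$ but not $W_{e'}$, and choose $k_0\in W_{f(n_0)}\cap A^c$; then $[k_0]_{E_A}=\{k_0\}$ is among the $E_A$-classes enumerated by $g(e)$, while since the $W_{f(n)}$ are pairwise disjoint we have $k_0\notin\bigcup_{n\in W_{e'}}W_{f(n)}=\ran\phi_{g(e')}$, so $\{k_0\}$ is not among those enumerated by $g(e')$, whence $g(e)\not\mathrel{E_A^+}g(e')$. Thus $g$ is a reduction $\mathord{=}^{ce}\leq E_A^+$, and combined with Proposition~\ref{prop:upperbound} (which applies since $E_A$ is a ceer) this gives that $E_A^+$ is computably bireducible with $\mathord{=}^{ce}$, i.e.\ $E_A$ is high for the computable FS-jump.

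I expect the last step to carry all the content, and the crux there is to notice that non-hyperhypersimplicity is precisely the combinatorial hypothesis needed: it lets one spread an arbitrary c.e.\ set $W_e$ across a fixed uniformly given family of pairwise disjoint finite blocks so that the resulting family of $E_A$-classes remembers exactly which blocks — hence which elements of $W_e$ — were used. The remaining verifications (computability of $g$, and the darkness argument) are routine, and in particular no priority construction or forcing is needed here, unlike in the earlier theorem on $E_A$ for a Mathias-generic complement.
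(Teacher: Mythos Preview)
Your proof is correct and follows essentially the same approach as the paper: you spell out the standard simplicity argument for darkness (which the paper merely cites from Gao--Gerdes), and your construction of the reduction $g$ via $\ran\phi_{g(e)}=\bigcup_{n\in W_e}W_{f(n)}$ using the disjoint weak array witnessing non-hyperhypersimplicity is exactly the paper's argument, with the same verification.
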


\begin{proof}
  It follows from \cite[Proposition~4.5]{gao-gerdes} together with the assumption that $A$ is simple that $E_A$ is dark.
  
  To see that $E_A^{\jump}$ is computably bireducible with $=^{ce}$, first it follows from Proposition~\ref{prop:upperbound} that $E_A^{\jump}\leq\mathord{=}^{ce}$. For the reduction in the reverse direction, since $A$ is not hyperhypersimple, there exists a computable function $f$ such that $\{W_{f(n)}:n\in\N\}$ is a pairwise disjoint family of finite sets and for all $n\in\N$ we have $W_{f(n)}\cap A^c\neq\emptyset$. Now given an index $e$ we compute an index $g(e)$ such that $\phi_{g(e)}$ is an enumeration of the set $\bigcup\{W_{f(n)}:n\in W_e\}$. Then since the $W_{f(n)}$ are pairwise disjoint and meet $A^c$, we have $W_e=W_{e'}$ if and only if $A^c\cap\ran\phi_{g(e)}$ and $A^c\cap\ran\phi_{g(e')}$ are distinct subsets of $A^c$. It follows that $e\mathrel{=^{ce}}e'$ if and only if $g(e)\mathrel{E_A^{\jump}}g(e')$, as desired.
\end{proof}

On the other hand, there also exist dark ceers $E$ such that $E$ is not high for the computable FS-jump. In order to state the results, we recall from \cite[Chapter~X]{soare} that a c.e.\ subset $A\subset\N$ is said to be \emph{maximal} if $A^c$ is infinite and for all c.e.\ sets $W$ either $W\setminus A$ or $W^c\setminus A$ is finite. We further note that if $A$ is maximal then it is hyperhypersimple.

\begin{theorem}
  \label{thm:maximal}
  Let $A$ be a maximal set. If $B$ is a c.e. set with $B\subsetneq A$, then $E_A^{\jump} < E_B^{\jump}$. In particular, $E_A$ is not high for the computable FS-jump.
\end{theorem}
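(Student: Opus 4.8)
The plan is to prove the two halves $E_A^+ \le E_B^+$ and $E_B^+ \not\le E_A^+$ separately, using throughout Proposition~\ref{prop:upperbound}: since $E_A$ and $E_B$ are ceers, $E_A^+$ and $E_B^+$ are computably bireducible with the restrictions of $=^{ce}$ to the indices of $E_A$-invariant and of $E_B$-invariant c.e.\ sets, respectively. Here a c.e.\ set $W$ is $E_A$-invariant exactly when $W \cap A = \emptyset$ or $W \supseteq A$; since $A$ is maximal, in the first case $W$ is finite (as $A$ is simple), and in the second case $W$ has the form $A \cup F$ or $\N \setminus F$ for some finite $F \subseteq A^c$. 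For the first half I would simply observe that every $E_A$-invariant c.e.\ set is $E_B$-invariant: if $W \cap A = \emptyset$ then $W \subseteq A^c \subseteq B^c$, and if $W \supseteq A$ then $W \supseteq B$ since $B \subseteq A$. Hence the identity map on indices, composed with the bireducibilities above, witnesses $E_A^+ \le E_B^+$. (This half uses only $B \subseteq A$.)

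For the strict inequality, suppose toward a contradiction that $h$ is a reduction of $E_B^+$ to $E_A^+$. Composing with the reduction $E_A^+ \to\, =^{ce}$ of Proposition~\ref{prop:upperbound} gives a computable map $e \mapsto v(e)$ with each $W_{v(e)}$ being $E_A$-invariant and $W_{v(e)} = W_{v(e')}$ iff $e \mathrel{E_B^+} e'$. Feeding in canonical indices $e_m$ for programs enumerating the singletons $\{m\}$ and setting $V_m := W_{v(e_m)}$, I obtain a uniformly c.e.\ sequence of $E_A$-invariant c.e.\ sets with $V_m = V_{m'}$ iff $[m]_{E_B} = [m']_{E_B}$; since $B^c \supseteq A^c$ is infinite, the $V_m$ for $m \in B^c$ are pairwise distinct, so the sequence takes infinitely many values. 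Let $U = \bigcup_m V_m$, a c.e.\ set. If $U$ is not cofinite, then by maximality applied to $U$ either $U \cap A^c$ is finite, in which case every $V_m$ is determined by $V_m \cap A^c \subseteq U \cap A^c$ together with the single bit ``$V_m \supseteq A$'' and so only finitely many values occur, a contradiction; or $U \cap A^c$ is cofinite in $A^c$, which forces every $V_m$ to be disjoint from $A$ (otherwise $U \supseteq A$ and $U$ is cofinite), so that $U \subseteq A^c$ is cofinite in $A^c$ and hence $A^c$ is c.e., contradicting simplicity of $A$. And if only finitely many $V_m$ contain $A$, then the union of the remaining $V_m$ --- a c.e.\ union over a cofinite, hence computable, set of indices --- is a c.e.\ subset of $A^c$, therefore finite, and once more only finitely many values occur.

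This leaves the case that $U$ is cofinite while infinitely many $V_m$ contain $A$, that is, infinitely many $V_m$ are either of the form $A \cup F$ or cofinite. This case is the crux and I expect it to be the main obstacle. The point is that $(V_m)$ is not an arbitrary uniformly c.e.\ sequence but comes from a reduction, so the rigid equivalence ``$V_m = V_{m'}$ iff $[m]_{E_B} = [m']_{E_B}$'' must hold; this already forbids sequences such as $V_m = A \cup \{m\}$, since no computable sequence of indices for subsets of $B^c$ can be constant on $A$ and injective off $A$ (membership in $A$ being only semidecidable, and enumerations only growing). I would try to push this rigidity, together with the cohesiveness of $A^c$ guaranteed by maximality, to a contradiction --- plausibly by using the recursion theorem to manufacture an auxiliary input whose $v$-image is forced either to coincide with or to differ from some $V_m$ in a way incompatible with maximality. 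Finally, applying the theorem to $B = \emptyset$, where $E_\emptyset^+ = \id^+ \equiv\, =^{ce}$ by Proposition~\ref{prop:idplus}, shows $=^{ce} \not\le E_A^+$, so $E_A$ is not high.
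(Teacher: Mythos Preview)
Your forward direction $E_A^+ \leq E_B^+$ is correct and in fact cleaner than the paper's Lemma~\ref{lem:reverse}: the observation that every $E_A$-invariant c.e.\ set is automatically $E_B$-invariant (because $B\subseteq A$) lets you pass directly between the two restricted copies of $=^{ce}$ furnished by Proposition~\ref{prop:upperbound}, avoiding the case split on whether $B$ is hyperhypersimple.

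The strictness argument, however, has a genuine gap --- exactly the one you flag as ``the crux.'' Your method extracts from a putative reduction only the values $V_m$ on singletons, and the case in which $U=\bigcup_m V_m$ is cofinite with infinitely many $V_m\supseteq A$ is not disposed of. The rigidity you invoke (``$V_m=V_{m'}$ iff $[m]_{E_B}=[m']_{E_B}$'') is too weak by itself: nothing so far forbids, say, $V_m=A\cup F_m$ with the $F_m$ distinct finite subsets of $A^c$, and the hoped-for recursion-theoretic trick is not carried out. Using only images of singletons discards the global constraint that the reduction must behave coherently on \emph{all} $E_B$-invariant c.e.\ sets.

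The paper's route is quite different and explains what is missing. It first proves a regularity lemma (Lemma~\ref{lem:monotone}): any computable $=^{ce}$-invariant map is monotone and inner-regular (every $W_{f(e)}$ is the union of the $W_{f(e')}$ over finite $W_{e'}\subseteq W_e$). This global information is then used to establish a strong self-fullness lemma (Lemma~\ref{lem:full}): for $A$ maximal, every computable self-reduction of $E_A^+$ hits every $E_A^+$-class and indeed corresponds to a finite-support permutation of the $E_A$-classes. The strictness now falls out by a short trick: from $E_B^+\leq E_A^+$ and $B\subsetneq A$ one gets (via $E_{A\setminus\{a\}}\equiv E_A\oplus\Delta(1)$ and Proposition~\ref{prop:product}) a reduction $E_A^+\times\Delta(2)\leq E_A^+$, and self-fullness on each copy of $E_A^+$ forces the two images to overlap, a contradiction. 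The step you are missing is precisely this passage through monotonicity/inner-regularity to a structural result about self-maps of $E_A^+$; the singleton-only analysis does not reach that.
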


The proof begins with several preliminary results, which may be of independent value.

\begin{lemma}
  \label{lem:reverse}
  If $A,B$ are c.e.\ sets and $B\subset A$, then $E_A^{\jump} \leq E_B^{\jump}$.
\end{lemma}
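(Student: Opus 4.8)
The plan is to reuse, essentially verbatim, the reduction that witnesses $E_A^+\leq\mathord{=}^{ce}$ in Proposition~\ref{prop:upperbound}: let $f$ be a computable function such that $\ran\phi_{f(e)}=[\ran\phi_e]_{E_A}$, the $E_A$-saturation of $\ran\phi_e$. This set is uniformly c.e.\ in $e$, since we can enumerate $\ran\phi_e$ directly, enumerate $A$ in parallel, and begin dumping all of $A$ into the output as soon as we see an element that has appeared in both enumerations. I claim $f$ is a reduction from $E_A^+$ to $E_B^+$. It is worth noting first why the naive choice (the identity on indices) fails: if $a\neq a'$ both lie in $A\setminus B$, then the singletons $\{a\}$ and $\{a'\}$ have the same $E_A$-saturation, namely $A$, but distinct $E_B$-saturations; replacing $\ran\phi_e$ by its $E_A$-saturation before comparing is precisely what repairs this.

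For the forward direction, suppose $e\mathrel{E_A^+}e'$, so $[\ran\phi_e]_{E_A}=[\ran\phi_{e'}]_{E_A}$. Then $\phi_{f(e)}$ and $\phi_{f(e')}$ enumerate literally the same set, so certainly $[\ran\phi_{f(e)}]_{E_B}=[\ran\phi_{f(e')}]_{E_B}$, i.e.\ $f(e)\mathrel{E_B^+}f(e')$. (Here I use the observation, already recorded in the proof of Proposition~\ref{prop:upperbound}, that $d\mathrel{E_B^+}d'$ iff $[\ran\phi_d]_{E_B}=[\ran\phi_{d'}]_{E_B}$.)

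For the backward direction I would invoke the key structural point that, since $B\subseteq A$, we have $E_B\subseteq E_A$ as sets of pairs (if $x,y\in B$ then $x,y\in A$), and hence every $E_A$-invariant set is $E_B$-invariant. In particular $[\ran\phi_e]_{E_A}$, being $E_A$-invariant, is already $E_B$-invariant, so that $[\ran\phi_{f(e)}]_{E_B}=\ran\phi_{f(e)}=[\ran\phi_e]_{E_A}$. Therefore if $f(e)\mathrel{E_B^+}f(e')$ then $[\ran\phi_e]_{E_A}=[\ran\phi_{f(e)}]_{E_B}=[\ran\phi_{f(e')}]_{E_B}=[\ran\phi_{e'}]_{E_A}$, i.e.\ $e\mathrel{E_A^+}e'$.

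The argument is short, so there is no single difficult step; the one thing to get right is that the comparison must be carried out \emph{after} passing to $E_A$-saturations, and the backward direction then succeeds precisely because an $E_A$-invariant set cannot be split apart by $E_B$ when $B\subseteq A$. I would remark that this is uniform over all $A,B$ with $B\subseteq A$, so the degenerate cases (such as $B=\emptyset$) require no separate treatment.
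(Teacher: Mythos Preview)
Your proof is correct, and it is genuinely different from---and in fact cleaner than---the paper's argument. The paper splits into two cases according to whether $B$ is hyperhypersimple: if not, it invokes Theorem~\ref{thm:nonhhs-is-high} to conclude $E_B^+$ is bireducible with $=^{ce}$ and then appeals to Proposition~\ref{prop:upperbound}; if $B$ is hyperhypersimple, it cites the structural fact (from Soare) that there is a computable $C$ with $B\cup C=A$, and uses $C$ to build a reduction $E_A\leq E_B$ at the base level. Your approach avoids both the case split and the outside citation by observing directly that the $E_A$-saturation map already lands in $E_B$-invariant sets whenever $B\subseteq A$, so the reduction from $E_A^+$ to $=^{ce}$ in Proposition~\ref{prop:upperbound} is already a reduction to $E_B^+$. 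This is uniform in $A,B$ and more elementary. The one thing the paper's argument yields that yours does not is that, in the hyperhypersimple case, one actually gets $E_A\leq E_B$ (not merely $E_A^+\leq E_B^+$); but that extra strength is not used anywhere, so your route is preferable for the lemma as stated.
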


\begin{proof}
  If $B$ is non-hyperhypersimple, then the result follows immediately from Proposition~\ref{prop:upperbound} and Theorem~\ref{thm:nonhhs-is-high}. If $B$ is hyperhypersimple, then by \cite[X.2.12]{soare} there exists a computable set $C$ such that $B\cup C=A$. Let $b\in B$ be arbitrary, and define
  \[f(n)=\begin{cases}b&n\in C\\n&n\notin C\end{cases}
  \]
  It is easy to see that $f$ is a computable reduction from $E_A$ to $E_B$, and hence by Proposition~\ref{prop:monotone}(c) we have $E_A^{\jump}\leq E_B^{\jump}$ as desired.
\end{proof}

In the next lemma we will use the following terminology about a function $f\colon\N\to\N$. We say that $f$ is \emph{$=^{ce}$-invariant} if $W_e=W_{e'}$ implies $W_{f(e)}=W_{f(e')}$, that $f$ is \emph{monotone} if $W_{e'}\subset W_{e}$ implies $W_{f(e')}\subset W_{f(e)}$, and that $f$ is \emph{inner-regular} if
\begin{equation}
  \label{eq:inner-regular}
  W_{f(e)}=\bigcup\left\{W_{f(e')} : W_{e'}\subset W_e\text{ and }W_{e'}\text{ is finite}\right\}\text{.}
\end{equation}
We are now ready to state the lemma. 

\begin{lemma}
  \label{lem:monotone}
  If $f$ is a computable function, the properties $=^{ce}$-invariant, monotone, and inner-regular are all equivalent.
\end{lemma}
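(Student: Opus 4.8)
The plan is to prove the cycle of implications: inner-regular $\Rightarrow$ monotone $\Rightarrow$ $=^{ce}$-invariant $\Rightarrow$ inner-regular. The first two implications are purely set-theoretic, using nothing about $f$ beyond that it is a function on indices. For inner-regular $\Rightarrow$ monotone: if $W_{e'}\subseteq W_e$ then every finite subset of $W_{e'}$ is a finite subset of $W_e$, so the union defining $W_{f(e')}$ in~\eqref{eq:inner-regular} ranges over a subfamily of the union defining $W_{f(e)}$, whence $W_{f(e')}\subseteq W_{f(e)}$. For monotone $\Rightarrow$ $=^{ce}$-invariant: if $W_e=W_{e'}$ then $W_e\subseteq W_{e'}$ and $W_{e'}\subseteq W_e$, so two applications of monotonicity give $W_{f(e)}=W_{f(e')}$.

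The substantive implication is $=^{ce}$-invariant $\Rightarrow$ inner-regular, and this is the only place where the computability of $f$ enters; the implication genuinely fails for arbitrary $f$ (for instance, the map taking each $e$ to an index for $\N$ when $W_e$ is finite and to an index for $\emptyset$ when $W_e$ is infinite is $=^{ce}$-invariant but neither monotone nor inner-regular). The idea is to apply the Rice--Shapiro theorem one point at a time. Fix $x\in\N$. Since $f$ is $=^{ce}$-invariant, the collection $\mathcal A_x=\{W_e\mid x\in W_{f(e)}\}$ of c.e.\ sets is well defined, and its index set $\{e\mid x\in W_{f(e)}\}$ is c.e.\ uniformly in $x$, since from $e$ we enumerate $x$ into it exactly when $\phi_{f(e)}(x)\downarrow$. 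By the Rice--Shapiro theorem (see, e.g., \cite{soare}), it follows that for every c.e.\ set $W$ we have $W\in\mathcal A_x$ if and only if some finite $D\subseteq W$ lies in $\mathcal A_x$. Unwinding this at $W=W_e$: $x\in W_{f(e)}$ if and only if there is an index $d$ with $W_d$ finite, $W_d\subseteq W_e$, and $x\in W_{f(d)}$. As this holds for every $x$, taking the union over $x$ yields $W_{f(e)}=\bigcup\{W_{f(d)}\mid W_d\subseteq W_e\text{ and }W_d\text{ is finite}\}$, which is precisely~\eqref{eq:inner-regular}.

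Should a self-contained treatment be preferred to citing Rice--Shapiro, the single direction we need can be reproved directly. Suppose $x\in W_{f(e)}$ but no finite subset of $W_e$ lies in $\mathcal A_x$; write $W_e=\bigcup_s D_s$ as an increasing union of finite sets, fix $c$ with $W_c=\{e'\mid x\in W_{f(e')}\}$, and use the recursion theorem to obtain an index $\widehat e$ whose set enumerates $D_s$ at stage $s$ for as long as $\widehat e\notin W_{c,s}$, and is frozen at its current finite value as soon as $\widehat e$ enters $W_c$. If $\widehat e$ never enters $W_c$ then $W_{\widehat e}=W_e$, so $=^{ce}$-invariance forces $x\in W_{f(\widehat e)}$ and hence $\widehat e\in W_c$, a contradiction; and if $\widehat e$ enters $W_c$ at some stage then $W_{\widehat e}$ is a finite subset of $W_e$ lying in $\mathcal A_x$, again a contradiction. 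This last implication is where all the effective content resides, and it is the main obstacle; the other two implications are routine.
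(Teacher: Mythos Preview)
Your proof is correct and follows essentially the same structure as the paper's: the cycle inner-regular $\Rightarrow$ monotone $\Rightarrow$ $=^{ce}$-invariant is dismissed as routine, and the content lies in $=^{ce}$-invariant $\Rightarrow$ inner-regular.

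The difference is in packaging. You recognize that, for each fixed $x$, the set $\{e\mid x\in W_{f(e)}\}$ is a c.e.\ index set, and then invoke Rice--Shapiro to obtain both inclusions of~\eqref{eq:inner-regular} simultaneously. The paper instead cites an external lemma (\cite[Lemma~4.5]{coskey-hamkins-miller}) for the $\supseteq$ inclusion and gives a direct recursion-theorem construction for the $\subseteq$ inclusion: build an index $e'$ that copies $W_e$ into $W_{e'}$ only so long as $f(e')$ has not yet halted on $x$, then argue by cases. Your optional self-contained argument is a close variant of this construction, phrased in terms of stagewise approximations $D_s$ rather than per-input halting conditions. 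The Rice--Shapiro route is a bit more economical, since it handles both inclusions at once and avoids the external citation; the paper's hands-on construction makes the mechanism slightly more visible. Neither approach buys anything the other does not.
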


\begin{proof}
  It is clear that inner-regular implies monotone, and monotone implies $=^{ce}$-invariant. We therefore need only show that $=^{ce}$-invariant implies inner-regular. Assume that $f$ is $=^{ce}$-invariant. Then \cite[Lemma~4.5]{coskey-hamkins-miller} gives that $f$ is monotone, so we have $W_{f(e)} \supset \bigcup\left\{W_{f(e')} : W_{e'}\subset W_e\text{ and }W_{e'}\text{ is finite}\right\}$.
  
  For the subset inclusion of Equation~\eqref{eq:inner-regular}, we assume that $x\in W_{f(e)}$ and aim to show that there exists $e'$ such that $W_{e'}\subset W_e$, $W_{e'}$ is finite, and $x\in W_{f(e')}$. 
  For any $e$, let $W_{e,s}=\{n : n< s \wedge \varphi_{e,s}(n) \downarrow\}$ be the partial enumeration of $W_e$ at stage $s$, so each $W_{e,s}$ is finite.
  We can use the Recursion Theorem to find an index $e'$ which satisfies the following:
  \[ W_{e',s} = \begin{cases}
  W_{e,s} & \text{if $x \notin W_{f(e'),s}$} \\
  W_{e,s'} & \text{if $s' \leq s$ is least with $x \in W_{f(e'),s'}$.}
  \end{cases}
  \]
  
  We must show that $W_{e'}\subset W_e$, $W_{e'}$ is finite, and $x\in W_{f(e')}$. It is clear that $W_{e'}\subset W_e$. To show that $x\in W_{f(e')}$, assume to the contrary that $x\notin W_{f(e')}$. Then $W_{e',s}=W_{e,s}$ for all $s$, that is, we would have $W_{e'}=W_e$. Since $f$ is $=^{ce}$-invariant, we would have $W_{f(e')}=W_{f(e)}$. Our assumption that $x\in W_{f(e)}$ would therefore imply that $x\in W_{f(e')}$ after all.
  
  Now that we know $x\in W_{f(e')}$, we know that there is $s$ with $x \in W_{f(e'),s}$. This means that for all $s$ we have $W_{e',s}=W_{e,s'}$ for the least such $s'$, so $W_{e'}=W_{e,s'}$ is finite, as desired.
\end{proof}

We note that the same conclusions remain true if we replace $=^{ce}$-invariance by $\id^{\jump}$-invariance, i.e., $f$ preserves equality of ranges rather than of domains. Thus we can apply this result to reductions among computable jumps.

\begin{corollary}
  Let $A$ be a maximal set. If $E^{\jump} \leq E_A^{\jump}$, then any $E$-invariant c.e.\ set contains either finitely or cofinitely many $E$-classes. In particular, if $E_B^{\jump} \leq E_A^{\jump}$ then $B$ is maximal.
\end{corollary}

\begin{proof}
  Let $f$ be a computable reduction from $E^{\jump}$ to $E_A^{\jump}$. We can assume without loss of generality that for all $e$, $\ran\phi_{f(e)}$ is $E_A$-invariant. Indeed, we may modify $f$ to ensure that if $\phi_{f(e)}$ enumerates any element of $A$ then $\phi_{f(e)}$ enumerates the rest of $A$ too. Hence we can assume that $f$ is $=^{ce}$-invariant. If $W=\ran \phi_e$ is an $E$-invariant c.e.\ set, then $R=\ran \phi_{f(e)}$ is an $E_A$-invariant c.e.\ set, hence either $R \setminus A$ is finite or $R$ is cofinite. If $R$ is cofinite, then $W$ must contain all but finitely many $E$-classes, or else there would be an infinite increasing chain of $E^{\jump}$-inequivalent c.e\  sets containing $W$ which must map to an infinite increasing chain of $E_A^{\jump}$-inequivalent c.e\  sets, which is impossible. Suppose instead that $R \setminus A$ is finite. Then by inner-regularity and $E_A$-invariance, there must be a finite set $F=\ran \phi_{e_0} \subset W$ so that $R= \ran \phi_{f(e_0)}$. But then $e_0 \mathrel{E^{\jump}} e$, so $W$ must contain only finitely many $E$-classes.
\end{proof}

We will use the following lemma, well-known in descriptive set theory as a consequence of the effective Reduction Property for the pointclass $\Sigma^0_1$.
\begin{lemma}
\label{lem:effective-reduction}
Let $A_n$ be a uniformly c.e. sequence of c.e. sets. Then there is a uniformly c.e. sequence of c.e. sets $\tilde{A}_n$ so that $\tilde{A}_n \subset A_n$ for each $n$, $\tilde{A}_n \cap \tilde{A}_m = \emptyset$ for $n \neq m$, and $\bigcup_n \tilde{A}_n = \bigcup_n A_n$.
\end{lemma}

\begin{proof}
  Let $f$ be a computable function with $A_n=W_{f(n)}$ for each $n$. Let $\tilde{A}_n.           = \{i : \exists s (\varphi_{f(n),s}(i)\downarrow \wedge (\forall m < n) (\forall t \leq s) \varphi_{f(m),t}(i) \uparrow )\}$.
  \end{proof}
 
We now give the main ingredient to the proof of Theorem~\ref{thm:maximal}.

\begin{definition}
  An equivalence relation $E$ is \emph{self-full} if whenever $f$ is a computable reduction from $E$ to $E$, then the range of $f$ meets every $E$ class.
\end{definition}

Letting $\id_n$ denote the identity equivalence relation on $\{0,\ldots,n-1\}$, $E$ being self-full is equivalent to $E \oplus \id_1 \not\leq E$, so is preserved under computable bireducibility. In the following, we say $f$ and $h$ are \emph{$E$-equivalent} if $f(n)\mathrel{E}h(n)$ for all $n$. We say that $h$ is \emph{induced by a finite support permutation of the $E_A$-classes} when there is an $E_A$-invariant permutation $\pi$ with finite support so that $\ran\phi_{h(e)} = \{\pi(n) : n \in \ran\phi_e\}$ for all $e$ so that $\ran\phi_e$ is $E_A$-invariant.

\begin{lemma}
  \label{lem:full}
  If $A$ is maximal then $E_A^{\jump}$ is self-full. In fact, if $f$ is a computable reduction from $E_A^{\jump}$ to itself, then $f$ is $E_A^{\jump}$-equivalent to a function $h$ induced by a finite support permutation of the $E_A$-classes.
\end{lemma}

\begin{proof}
  Suppose $f$ is a computable reduction from $E_A^{\jump}$ to itself. We can assume without loss of generality that for all $e$, $\ran\phi_{f(e)}$ is $E_A$-invariant. Indeed, we may modify $f$ to ensure that if $\phi_{f(e)}$ enumerates any element of $A$ then $\phi_{f(e)}$ enumerates the rest of $A$ too. Having done so, we introduce the following mild abuse of notation: if $R=\ran\phi_e$ then we will write $f(R)$ for $\ran\phi_{f(e)}$. Due to our assumption about $f$, this notation is well-defined. Note that we thus have that $f$ is $=^{ce}$-invariant as well, and thus monotone and inner-regular.
  
  We will exploit the following consequence of the monotonicity of $f$ several times: If $C$ and $D$ are c.e. sets with $C \subset D$ and $D \setminus C$ finite and disjoint from $A$, then $|f(D)\setminus f(C)| \geq |D \setminus C|$. This follows since there is a chain of length $|D \setminus C|+1$ of $E_A^{\jump}$-inequivalent sets between $C$ and $D$, which must map to a chain of $E_A^{\jump}$-inequivalent sets between $f(C)$ and $f(D)$. Similarly, if $C$ is cofinite with $A \subseteq C$, then $|f(C)^c| \geq |C^c|$. The maximality of $A$ then also implies that if $C \setminus A$ is finite then so is $f(C) \setminus A$. 
  
  The heart of the proof will be to show that there is a finite support permutation $\pi$ of $\N$ such that for any c.e.\ set $R$, we have $f(R)=\{\pi(n):n\in R\}$. In particular, this implies that $f$ meets every $E_A^{\jump}$ class, as desired. We begin be seeing that the range of $f$ is almost covered by the images of singletons.
  
  \begin{claim}
    There is a finite set $C$ such that $f(A)\cup(f(\N)\setminus\bigcup_n f(\{n\}))\subset f(C)$.
  \end{claim} 
  \begin{claimproof}
  First, observe that $\bigcup_n f(\{n\})$ is an infinite c.e.\ set (here we tacitly select indices for $\{n\}$ uniformly), and hence intersects $A$, and thus contains $A$. Moreover, there are infinitely many $n$ so that $f(\{n\})$ intersects $A$; otherwise we could omit such $n$ and have an infinite c.e. set disjoint from $A$. Thus $\{n : f(\{n\}) \cap A \neq \emptyset\}$ is an infinite c.e.\ set, and so intersects $A$. Hence there is $n \in A$ with $f(\{n\}) \cap A \neq \emptyset$, so $A\subset f(A)$. Next, since the sets $f(\{n\})$ are distinct for $n \notin A$, we have $\left( \bigcup_n f(\{n\}) \right) \setminus A$ infinite, so the maximality of $A$ implies $\bigcup_n f(\{n\})$ is cofinite. Hence by the inner-regularity of $f$ we can find a finite set $C$ such that $f(A)\cup(f(\N)\setminus\bigcup_n f(\{n\})\subset f(C)$.
  \end{claimproof}
    
  We next see that we will be able to select distinct elements from the images of singletons.    
  \begin{claim} For $n\notin A\cup C$, we have $f(\{n\})\setminus(f(C)\cup\bigcup_{m\neq n}f(\{m\}))\neq\emptyset$.
  \end{claim}
  \begin{claimproof}
  Let $n\notin A\cup C$. Since $f$ is a reduction and is monotone, we can find $x\in f(\N)\setminus f(\N\setminus\{n\})$. Using the definition of $C$, the fact that $n\notin C$, and monotonicity, we have $f(\N)\setminus\bigcup_mf(\{m\})\subset f(C)\subset f(\N\setminus\{n\})$. In particular, $x\notin f(C)$ and therefore $x\in\bigcup_mf(\{m\})$. Again by monotonicity, $x\notin\bigcup_{m\neq n}f(\{m\})$, so we must have $x\in f(\{n\})$, completing the claim.
  \end{claimproof}
  
  We now construct a first approximation to the desired permutation $\pi$. 
  
  \begin{claim} There is a finite support permutation $\sigma$ such that for $n\notin A\cup C$ we have $\sigma(n)\in f(\{n\})\setminus A$. 
  \end{claim}
  \begin{claimproof}
  From the previous claim, Lemma~\ref{lem:effective-reduction} gives a uniformly c.e.\ sequence $B_n$ of pairwise disjoint sets such that $B_n\subset f(\{n\})$ and $\bigcup_n B_n = \bigcup_n f(\{n\})$, so that for $n\notin A\cup C$ we have $B_n\setminus(f(C)\cup\bigcup_{m\neq n}f(\{m\}))\neq\emptyset$; in particular $B_n \setminus f(C) \neq \emptyset$. We may shrink $B_n$ so that $B_n$ is disjoint from the finite set $f(C)\setminus A$ for all $n$; we may further shrink $B_n$ uniformly to a set $\tilde{B}_n$ so that $\tilde{B}_n\setminus A$ is a singleton for all $n\notin A\cup C$. Note that we do not claim or require that this singleton is not in $\bigcup_{m\neq n}f(\{m\})$, but distinct $n$'s not in $A \cup C$ will produce distinct singletons.
  
  Since $C \setminus A$ is finite, we have that $f((C\setminus A)^c)$ is cofinite, and monotonicity of $f$ implies that $|f((C\setminus A)^c)^c|\geq|C\setminus A|$ as discussed above. We may then let $p$ be any injection from $C\setminus A\to f((C\setminus A)^c)^c$. We now define:
  \[G_n=\begin{cases}
    A             & n\in A \\
    A\cup\{p(n)\} & n\in C\setminus A \\
    A\cup \tilde{B}_n     & n\notin A\cup C
  \end{cases}.
  \]
  Observe that $G_n$ is a uniformly c.e.\ sequence, since we may first check if $n \in C \setminus A$; if not, we enumerate $\tilde{B}_n$ into $G_n$ until we see $n$ enumerated in $A$ (if ever), at which point we enumerate $A$ (which will then contain $\tilde{B}_n$) into $G_n$. We define $\sigma$ as follows:
  \[\sigma(n)=\begin{cases}
    n&n\in A\\
    \text{the unique element of $G_n\setminus A$} & n\notin A
  \end{cases}
  \]
  This completes the definition of $\sigma$. Note that $\sigma(n) \in f(\{n\})$ for $n \notin (C \setminus A)$, and $\sigma(n) \notin A$ for $n \notin A$, as required. We do not claim \emph{a priori} that $\sigma$ is computable (although this will follow later), but we can use the sequence $G_n$ to obtain effectiveness. Define the function $g(R)=\bigcup_{n\in R}G_n$, which is computable in the indices. 

  We check that $\sigma$ is a permutation with finite support. It is immediate from the construction that $\sigma$ is injective. To show $\sigma$ is surjective, assume $k$ is not in the range of $\sigma$. Then the sequence $R_0=A\cup\{k\}$ and $R_{n+1}=g(R_n)$. Since $k\notin A$ we have that $R_n\setminus A$ is a singleton for all $n$. Moreover the singletons are distinct since $\sigma$ is injective and none of the singletons can equal $k$ for $n>0$. Applying Lemma~\ref{lem:effective-reduction} to the sequence $R_n$, we obtain a uniformly c.e.\ sequence of nonempty pairwise disjoint sets, all meeting $A^c$. This contradicts that $A$ is hyperhypersimple (see \cite[Exercise~X.2.16]{soare}). To see that $\sigma$ has finite support, first note that $\sigma$ cannot have an infinite orbit. Otherwise, we could similarly produce a uniformly c.e. sequence (using the function $g$) which contradicts that $A$ is hyperhypersimple. If $\sigma$ had infinitely many nontrivial orbits, let $R=\{n:(\exists k\geq n)\;n\in G_k\}$. Then $A\subset R$, and for $n\notin A$ we have $n\in R$ when $n$ is the least element of its orbit and $n\notin R$ when $n$ is the greatest element of a nontrivial orbit. Thus $R\setminus A$ is infinite and co-infinite, again contradicting that $A$ is maximal.
  \end{claimproof}
  
  We are now ready to construct $\pi$ as follows. Let $\tilde C=(C\setminus A)\cup\supp(\sigma)$. If $R$ is disjoint from $C\setminus A$ then $g(R)\subset f(R)$, therefore if $R$ is disjoint from $\tilde C$ we have $R\subset f(R)$. By monotonicity of $f$, if $R$ is disjoint from $\tilde C$ and cofinite, then by the observation above we have $|f(R)^c| \geq |R^c|$, so $R=f(R)$. In particular $f(\tilde{C}^c)=\tilde{C}^c$. For any $k\in\tilde{C}$, since $k \notin A$ we have that $\tilde C^c\cup\{k\}$ is $E_A^{\jump}$-inequivalent to $\tilde{C}^c$; therefore we must have that $f$ sends $\tilde C^c\cup\{k\}$ to $\tilde C^c\cup\{\pi(k)\}$ for some $\pi(k)\in\tilde C$ since the complement of $f(\tilde C^c\cup\{k\})$ must be at least as large as the complement of $\tilde C^c\cup\{k\}$, but must be smaller than $\tilde{C}$. This defines $\pi$ on the finite set $\tilde{C}$; as $\pi$ is injective it is a permutation of $\tilde{C}$. Additionally define $\pi$ to be the identity on $\tilde C^c$. This completes the definition of $\pi$. 
  
  We have that $\pi$ is an $E_A$-invariant permutation with finite support; it remains to verify that $\pi$ induces the desired function. Let $h$ be a computable function such that $h(R)=\{\pi(n):n\in R\}$. 
  
  \begin{claim} For all $E_A$-invariant c.e. sets $R$ we have $f(R)=h(R)$. 
  \end{claim}
  \begin{claimproof}
  We first establish this for sets of the form $\N \setminus \{n\}$ for $n \notin A$.
  Note that if $F \subset \tilde{C}$ then we have $f(\tilde{C}^c \cup F) = \tilde{C}^c \cup \{\pi(n):n \in F\}$ from monotonicity and the observation earlier; in particular we see that $f(\N)=\N$ and $f(\tilde{C}^c)=\tilde{C}^c$. From this, we see that if $R$ is any cofinite set containing $A$, then $|f(R)^c|=|R^c|$. For any given $n \notin A$, we claim that $f(\N \setminus \{n\})=\N \setminus \{\pi(n)\}$. We know this already for $n \in \tilde{C}$. If $n \in \tilde{C}^c$ then $f(\N \setminus \{n\})=\N \setminus \{\pi(k)\}$ for some $k$; this $k$ can not be in $\tilde{C}$ since $f(\N \setminus \{k\})=\N \setminus \{\pi(k)\}$ and $\N \setminus \{n\}$ and $\N \setminus \{k\}$ are $E_A^{\jump}$-inequivalent. But since $\tilde{C}^c\setminus \{n\} \subset f(\N \setminus \{n\})$ we must have $\pi(k)=n=\pi(n)$ as claimed.
  
    Now let $R$ be any infinite $E_A$-invariant c.e.\ set, so $R$ contains $A$. Then $R$ is the intersection of the sets $\N \setminus \{n\}$ for $n \notin R$, so $f(R) \subset h(R)$ for all such $R$ by monotonicity. Suppose there were an infinite $E_A$-invariant c.e.\ set $R$ and some $k \in h(R) \setminus f(R)$. Then $k=\pi(n)$ for some $n \in R \setminus A$, so $k \notin f(A \cup \{n\}) \subset h(A \cup \{n\})=A \cup \{k\}$ and thus $f(A \cup \{n\})=A$, contradicting $A \subset f(A)$.
    
    Finally, we consider finite $R$. If $n \notin A$, then $f(\{n\}) \subset f(A \cup \{n\}) = h(A \cup \{n\}) = A \cup \{\pi(n)\}$. We can not have $f(\{n\}) =A$, so $f(\{n\})=\{\pi(n)\}$. Let $R$ be any finite $E_A$-invariant c.e.\ set, so $R$ is disjoint from $A$. Then $h(R) \subset f(R)$. We also have that $f(R) \subset f(A \cup R)=h(A \cup R)=A \cup h(R)$, so we must have that $f(R) =h(R)$.
    \end{claimproof}
    Hence, $f$ is $E_A^{\jump}$-equivalent to $h$, completing the proof of the lemma.
\end{proof}

\begin{proof}[Proof of Theorem~\ref{thm:maximal}]
  Let $A$ be maximal and $B\subsetneq A$. Fix any $a\in A\setminus B$. We observe that $E_{A\setminus\{a\}}$ is computably bireducible with $E_A\oplus\id_1$. Therefore by Proposition~\ref{prop:monotone}(c) we have that $E_{A\setminus\{a\}}^{\jump}$ is computably bireducible with $E_A^{\jump}\times\id_2$.
  
  Now assume towards a contradiction that $E_B^{\jump}\leq E_A^{\jump}$. Then by Lemma~\ref{lem:reverse} we have $E_{A\setminus\{a\}}^{\jump}\leq E_A^{\jump}$ and hence by the previous paragraph we have $E_A^{\jump}\times\id_2\leq E_A^{\jump}$. But if $f$ is such a reduction, then by Lemma~\ref{lem:full} the restriction of $f$ to either copy of $E_A^{\jump}$ has range meeting every $E_A^{\jump}$ class. But for a reduction $f$ we cannot have this property true of both copies of $E_A^{\jump}$, so we have reached a contradiction.
\end{proof}

We record here several immediate consequences of Theorem~\ref{thm:maximal} and its proof.

\begin{corollary}
  Let $A,B$ be maximal sets.
  \begin{itemize}
    \item If $a\in A$ then $E_A^{\jump} < E_{A\setminus\{a\}}^{\jump}$, and if $b\notin A$ then $E_{A\cup\{b\}}^{\jump} < E_{A}^{\jump}$.
    \item If $|A\triangle B|<\infty$, then $E_A^{\jump} \leq E_B^{\jump}$ iff $|B\setminus A|\leq|A\setminus B|$.
    \item If a c.e. set $C$ is contained in a maximal set, then it is contained in a maximal set $D$ such that $E_D^{\jump} < E_C^{\jump}$.
  \end{itemize}
\end{corollary}

We conclude with a small refinement of the second statement of Theorem~\ref{thm:maximal}. Recall that a c.e.\ set $A$ is said to be \emph{quasi-maximal} if it is the intersection of finitely many maximal sets. We refer the reader to \cite[X.3.10]{soare} for more on this notion. In particular, every quasi-maximal set is simple (see \cite[X.3.10(b)]{soare}).

\begin{theorem}
  \label{thm:quasimaximal}
  If $A\subset\N$ is quasi-maximal then $E_A$ is not high for the computable FS-jump.
\end{theorem}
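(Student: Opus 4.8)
The plan is to establish that $E_A^+$ is self-full whenever $A$ is quasimaximal — generalizing Lemma~\ref{lem:full} — and to combine this with the fact that $=^{ce}$ is \emph{not} self-full, so that no high ceer can have self-full jump.

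First some preliminary facts I would record. If $M_1,M_2$ are maximal and $M_1^c\cap M_2^c$ is infinite then, since $M_1^c$ is cohesive, $M_1^c\subseteq^* M_2^c$ and symmetrically $M_2^c\subseteq^* M_1^c$, so $M_1$ and $M_2$ agree modulo finite; hence if $A=M_1\cap\cdots\cap M_n$ is quasimaximal with the $M_i$ pairwise distinct and maximal, then the $M_i^c$ are pairwise almost disjoint, $A^c=C_1\sqcup\cdots\sqcup C_n$ modulo finite sets with $C_i$ almost equal to $M_i^c$, the lattice $\mathcal L^*(A)$ of c.e.\ supersets of $A$ modulo finite is the finite Boolean algebra on $n$ atoms, and $A$ is hyperhypersimple, hence simple. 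In particular $E_A$ is a ceer with infinitely many classes which is dark (by \cite[Proposition~4.5]{gao-gerdes}, as in the proof of Theorem~\ref{thm:nonhhs-is-high}), and $E_A^+\le\mathord{=}^{ce}$ by Proposition~\ref{prop:upperbound}. On the other hand $=^{ce}$ is not self-full: sending $e$ to an index for $\{0\}\cup\{x+1:x\in W_e\}$ is a computable reduction of $=^{ce}$ to itself whose range omits the class of $\emptyset$. Since self-fullness is preserved under computable bireducibility — composing a self-reduction $r$ of $G'$ with reductions $G\to G'$ and $G'\to G$ witnessing $G\equiv G'$ yields a self-reduction of $G$, whose surjectivity on $G$-classes forces surjectivity of $r$ on $G'$-classes — the theorem reduces to showing that $E_A^+$ is self-full for quasimaximal $A$.

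For that I would rerun the proof of Lemma~\ref{lem:full}. Let $f\colon E_A^+\to E_A^+$ be a computable reduction; as there, reduce to the case that $\ran\phi_{f(e)}$ is $E_A$-invariant for all $e$, and write $f(R)$ for this set. The opening portion of the proof of Lemma~\ref{lem:full} — that $A\subseteq f(A)$, that the sets $f(\{n\})$ are pairwise distinct for $n\notin A$, that $\{n:f(\{n\})\cap A\neq\emptyset\}$ is infinite, and the several appeals to effective reduction producing uniformly c.e.\ pairwise disjoint families which are then ruled out by hyperhypersimplicity — uses only that $A$ is simple and hyperhypersimple, so it carries over unchanged. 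The single essential use of \emph{maximality} in Lemma~\ref{lem:full} is the principle that a c.e.\ set $X$ with $A\subseteq X$ and $X\setminus A$ infinite is cofinite. For quasimaximal $A$ of rank $n$, such an $X$ is merely of the form $A\cup\bigcup_{i\in T}C_i$ modulo finite for some nonempty $T\subseteq\{1,\dots,n\}$, and the new lemma to insert at each such point is that in fact $T=\{1,\dots,n\}$, i.e.\ $X$ really is cofinite. When $X$ is one of the sets $\bigcup_n f(\{n\})$ arising in the argument this should follow by a block-by-block version of the usual contradiction: if some block $C_j$ is almost disjoint from $X=\bigcup_n f(\{n\})$, then $f$ maps the infinitely many pairwise distinct singleton classes $\{m\}$ with $m\in C_j$ into the remaining blocks, and the same disjoint-family construction as in Lemma~\ref{lem:full} now contradicts that the single maximal set $M_j$ is hyperhypersimple. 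With this in hand, the remainder of the proof of Lemma~\ref{lem:full} goes through with only routine bookkeeping, yielding a finite-support permutation $\pi$ of $\N$ with $f(R)=\pi[R]$ for every $E_A$-invariant c.e.\ set $R$; such an $f$ is onto the $E_A^+$-classes. (No compatibility with the block decomposition need be checked, since $\pi$ has finite support and so automatically fixes each $C_i$ modulo finite.)

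The step I expect to be the main obstacle is exactly this insertion — showing that a reduction $f\colon E_A^+\to E_A^+$ cannot collapse one canonical block of $A^c$ into the others — since it is the one place where the blanket hypothesis ``$A$ maximal'' of Lemma~\ref{lem:full} must be replaced by an argument run separately against each of the $n$ constituent maximal sets $M_j$, complicated by the fact that the blocks $C_j$ are co-c.e.\ and not c.e., so that one cannot enumerate a block directly but must argue through the uniformly c.e.\ families $f(\{n\})$ and the effective-reduction constructions exactly as in Lemma~\ref{lem:full}.
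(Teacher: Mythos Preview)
Your route is genuinely different from the paper's. The paper does \emph{not} attempt to generalize Lemma~\ref{lem:full} to quasimaximal sets; it argues directly from the lattice structure. Assuming $=^{ce}\leq E_A^+$ and composing with the reduction of Proposition~\ref{prop:upperbound} gives a computable $=^{ce}$-invariant $f$ with every $W_{f(e)}$ $E_A$-invariant. Inner regularity (Lemma~\ref{lem:monotone}) is then used to normalize so that $A\subset W_{f(e)}$ for all $e$; monotonicity of $f$ now embeds the infinite lattice of c.e.\ sets modulo finite into $\mathcal L^*(A)$, which for quasimaximal $A$ is a finite Boolean algebra by \cite[X.3.10(a)]{soare}---contradiction. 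This is short and uses only the finiteness of $\mathcal L^*(A)$, never any permutation structure. Your strategy would buy more (a quasimaximal analogue of the corollaries after Theorem~\ref{thm:maximal}), but at the cost of substantially more work.

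There is also a genuine gap in your key step. You argue that if some block $C_j$ is almost disjoint from $X=\bigcup_n f(\{n\})$, then ``$f$ maps the singleton classes $\{m\}$ with $m\in C_j$ into the remaining blocks, contradicting that $M_j$ is hyperhypersimple.'' But the hypothesis is a statement about the \emph{range}: every $f(\{n\})$ avoids $C_j\approx M_j^c$, regardless of which block $n$ came from. So the disjoint family you would build lives inside $M_j$, not in $M_j^c$, and there is nothing here to contradict hyperhypersimplicity of $M_j$. What you actually need is a range-side argument that the infinitely many distinct $E_A$-invariant c.e.\ sets $f(\{n\})$ cannot all sit, modulo $A$, inside a proper subunion of the blocks; since each $f(\{n\})$ may differ from $A$ only finitely, the finiteness of $\mathcal L^*(A)$ alone does not rule this out. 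Note too that Lemma~\ref{lem:full} invokes maximality a second time (the set $R$ with $R\setminus A$ infinite and coinfinite, used to bound the nontrivial orbits of $\sigma$), which your ``routine bookkeeping'' must also repair block by block.
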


\begin{proof}
  Suppose towards a contradiction that $\mathord{=}^{ce}\leq E_A^{\jump}$. By Proposition~\ref{prop:upperbound}, $E_A^{\jump}$ is reducible to the restriction of $=^{ce}$ to the $E_A$-invariant sets, so by composing reductions there exists a computable reduction $f$ from $=^{ce}$ to $=^{ce}$ such that for all $e$, $W_{f(e)}$ is $E_A$-invariant. Since $A$ is simple, it follows that for all $e$ we have $A\subset W_{f(e)}$ iff $W_{f(e)}$ is infinite and $A\cap W_{f(e)}=\emptyset$ iff $W_{f(e)}$ is finite.
  
  We claim that we may find such an $f$ so that for all $e$ we have $A\subset W_{f(e)}$. We first show that there exists $e_0$ such that $W_{e_0}$ is finite and $A\subset W_{f(e_0)}$. Let $e$ be any index such that $W_e=\N$. By Lemma~\ref{lem:monotone}, $f$ is inner-regular. Since $f$ is a reduction, it follows from Equation~\ref{eq:inner-regular} that $W_{f(e)}$ is infinite and hence $A\subset W_{f(e)}$. Further examining Equation~\ref{eq:inner-regular}, together with the last sentence of the previous paragraph, we conclude there exists $e_0$ as desired.
  Let $g$ be a computable function such that $W_{g(e)}=W_{e_0}\cup\{\max(W_{e_0})+x:x\in W_e\}$. Then replacing $f$ with $f\circ g$ completes the claim.
  
  It follows from the claim, together with the fact that $f$ is monotone, that the lattice of c.e.\ sets modulo finite may be embedded into the lattice of c.e.\ sets containing $A$ modulo finite. But the former lattice is infinite, and by \cite[X.3.10(a)]{soare} the latter lattice is finite, a contradiction.
\end{proof}

\section{Additional remarks and open questions}

We close with some open questions and directions for further investigation.

\begin{question}
  For a c.e.\ set $A$, when is $E_A^{\jump}$ bireducible with $=^{ce}$?
\end{question}

By Theorem \ref{thm:nonhhs-is-high} if $A$ is not hyperhypersimple then $E_A$ is high for the jump, and by Theorem~\ref{thm:quasimaximal} if $A$ is quasi-maximal then $E_A^{\jump}<\mathord{=}^{ce}$. The question is, if $A$ is hyperhypersimple but not quasi-maximal, is $E_A^{\jump}$ high for the computable FS-jump? One construction of such a set is given in an exercise in \cite[IX.2.28f]{odifreddi}.
 
We do not know whether the choice of notation for a countable ordinal affects the iterated jump.
 
\begin{question}
  If $a,b \in \mathcal{O}$ with $|a|=|b|$, is $E^{\jump a}$ computably bireducible with $E^{\jump b}$?
\end{question}
 
Although we saw that every hyperarithmetic set is many-one reducible to some jump of the identity, we do not know if every hyperarithmetic equivalence relation is computably reducible to some iterated jump of the identity.

\begin{question}
  If $E$ is hyperarithmetic, is there $a \in \mathcal{O}$ with $E \leq \id^{\jump a}$?
\end{question}

For $E$ hyperarithmetic, we have $e \mathrel{E} e'$ iff $[e]_E=[e']_E$, so that $E$ is computably reducible to the relativized version of $=^{ce}$, denoted $=^{ce,E}$, considered in \cite{bard}. This question is then equivalent to asking if these relativized equivalence relations with hyperarithmetic oracles are computably reducible to iterated jumps of the unrelativized $=^{ce}$.

We also note that, unlike the case of the classical Friedman--Stanley jump, the equivalence relation $E_1$ is not an obstruction. Recall that $E_1$ may be defined on $\mathcal{P}(\N)$ by setting $A \mathrel{E_1} B$ when $A^{[n]}=B^{[n]}$ for all but finitely many $n$, and that $E_1$ is not Borel reducible to any iterated Friedman--Stanley jump of equality. 

\begin{proposition}
  $E_1^{ce} \leq \left(=^{ce}\right)^{\jump}$.
\end{proposition}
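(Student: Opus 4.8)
The plan is to produce a computable reduction $g$ from $E_1^{ce}$ to $(=^{ce})^+$ by mapping each index $e$ to a uniformly c.e.\ enumeration of the \emph{entire} $E_1^{ce}$-class of $W_e$, regarded as a countable set of c.e.\ sets. First I unwind the two sides. Regarding $W_e$ via its columns $(W_e)_{(n)}$ as a sequence of c.e.\ sets, we have $e \mathrel{E_1^{ce}} e'$ iff these column-sequences are eventually equal, i.e.\ $(W_e)_{(n)} = (W_{e'})_{(n)}$ for all but finitely many $n$. On the other side, unwinding the definition of the jump, $d \mathrel{(=^{ce})^+} d'$ iff $\{W_{\phi_d(n)} : n\} = \{W_{\phi_{d'}(n)} : n\}$ as sets of c.e.\ sets. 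The crux of the proof --- and precisely the place where the c.e.\ setting departs from the Borel one, where $E_1$ is an obstruction because a point's class is uncountable --- is the observation that the $E_1^{ce}$-class of $W_e$, namely $\Phi(e) := \{V : V$ agrees with $W_e$ on all but finitely many columns$\}$, is a countable, indeed uniformly c.e., set of c.e.\ sets: changing the first $N$ columns of $W_e$ to arbitrary c.e.\ sets requires only varying $N$ and a single index $i$ supplying the new columns.

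Accordingly, I let $g(e)$ be an index for the (total) computable function which on input $\langle N,i\rangle$ outputs an index for the c.e.\ set $V_{N,i}$ whose $n$th column is $(W_i)_{(n)}$ for $n<N$ and $(W_e)_{(n)}$ for $n\geq N$; concretely, $\langle n,p\rangle\in V_{N,i}$ iff $\bigl(n<N \wedge \langle n,p\rangle\in W_i\bigr)$ or $\bigl(n\geq N \wedge \langle n,p\rangle\in W_e\bigr)$. This is c.e.\ uniformly in $N,i,e$, so $g$ is computable. I then verify $\{W_{\phi_{g(e)}(m)} : m\} = \{V_{N,i} : N,i\in\N\} = \Phi(e)$: each $V_{N,i}$ agrees with $W_e$ off the first $N$ columns, and conversely if $V$ agrees with $W_e$ on every column $\geq N$, then choosing $i$ with $W_i = V$ yields $V_{N,i} = V$.

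It remains to check $g$ is a reduction. Since agreeing cofinitely in columns is an equivalence relation on c.e.\ sets, $e \mathrel{E_1^{ce}} e'$ implies $\Phi(e)=\Phi(e')$, hence $g(e) \mathrel{(=^{ce})^+} g(e')$. For the converse, observe $W_e = V_{0,e} \in \Phi(e)$; so if $g(e) \mathrel{(=^{ce})^+} g(e')$ then $\Phi(e)=\Phi(e')$ gives $W_e \in \Phi(e')$, and by definition of $\Phi(e')$ this says exactly that $W_e$ agrees cofinitely with $W_{e'}$, i.e.\ $e \mathrel{E_1^{ce}} e'$.

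The only genuine content is the countability and uniform enumerability of $\Phi(e)$ isolated in the first paragraph; everything else is routine bookkeeping with the pairing function used to split a c.e.\ set into columns, so I expect the written-out argument to be quite short. (One may also phrase the target as $(F_2)^{ce}$ via Proposition~\ref{prop:idplus}, but working directly from the definition of $(=^{ce})^+$ seems cleanest.)
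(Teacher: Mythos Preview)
Your proposal is correct and is essentially the same argument as the paper's: your map $\langle N,i\rangle\mapsto V_{N,i}$ is exactly the paper's $\langle f,m\rangle\mapsto$ (index for) $\bigcup_{n<m}(W_f)_{(n)}\cup\bigcup_{n\geq m}(W_e)_{(n)}$, interpreted as the set whose $n$th column is $(W_f)_{(n)}$ for $n<m$ and $(W_e)_{(n)}$ otherwise. You have simply made explicit the verification that the enumerated family coincides with the $E_1^{ce}$-class of $W_e$ and checked both directions of the reduction, which the paper leaves to the reader.
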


\begin{proof}
  Given $e$, let $g(e)$ be such that $\phi_{g(e)}(\langle f,m\rangle)$ is an index for an enumeration of the set $\bigcup_{n<m} (W_f)^{[n]} \cup \bigcup_{n \geq m} (W_e)^{[n]}$. Then $e \mathrel{E_1^{ce}} e'$ if and only if $g(e) \mathrel{\left(=^{ce}\right)^{\jump}} g(e')$.
\end{proof}

We can also ask what other fixed points exist besides $\cong_{\mathcal{T}}$. We note that there is no known characterization of fixed points of the classical Friedman--Stanley jump.

\begin{question}
  Characterize the fixed points of the computable FS-jump.
\end{question}  

We used the relations $\subseteq_E$ in establishing properness of the computable FS-jump jump, but the operation $E \mapsto \subseteq_E$ can be applied to other relations and it may be of interest to study its effect on partial orders.

\begin{question}
What can be said about the mapping $E \mapsto \subseteq_E$ as an operation on computable partial orders?
\end{question}

Finally, we can ask about when the computable FS-jump of an equivalence relation fails to be above the identity relation. The proof of Theorem~\ref{thm:verydarkarithmetic} shows that there is a $\Delta^0_4$ equivalence relation $E$ with infinitely many classes so that $\id\not\leq E^{\jump}$, and Theorem~\ref{thm:lowerbound} shows that there is no $\Sigma^0_1$ such $E$, but we do not know if there can be such an $E$ which is, e.g., $\Sigma^0_2$ or $\Sigma^0_3$.

\begin{question}
What is the least complexity of an equivalence relation $E$ with infinitely many classes such that $\id\not\leq E^{\jump}$?
\end{question}

\bibliographystyle{plain}

\end{document}